\numberwithin{equation}{section}
\newtheorem{theorem}[equation]{Theorem}
\newtheorem*{theorem*}{Theorem}
\newtheorem{prop}[equation]{Proposition}
\newtheorem*{prop*}{Proposition}
\newtheorem{lemma}[equation]{Lemma}
\newtheorem{cor}[equation]{Corollary}
\newtheorem{conjecture}[equation]{Conjecture}
\theoremstyle{definition}
\newtheorem{example}[equation]{Example}
\newtheorem{definition}[equation]{Definition}
\newtheorem*{definition*}{Definition}
\newtheorem{remark}[equation]{Remark}
\newcommand{\bbP}{\mathbb{P}}
\newcommand{\bbZ}{\mathbb{Z}}
\newcommand{\bbN}{\mathbb{N}}
\newcommand{\field}{\mathbf{k}}
\newcommand{\faceof}{\prec}
\DeclareMathOperator{\pdim}{pdim}
\DeclareMathOperator{\depth}{depth}
\DeclareMathOperator{\vpdim}{vpdim}
\DeclareMathOperator{\lcm}{lcm}
\DeclareMathOperator{\length}{length}
\DeclareMathOperator{\Pic}{Pic}
\title{Virtual Resolutions of Monomial Ideals on Toric Varieties}
\author{Jay Yang}
\thanks{The author was supported by NSF DMS-1502553 and DMS-1745638.}
\begin{document}
\begin{abstract}
  We use cellular resolutions of monomial ideals to prove an analog of Hilbert's syzygy theorem for virtual resolutions of monomial ideals on smooth toric varieties.
\end{abstract}
\maketitle

\section{Introduction}

The theory of monomial ideals have provided a rich field of study for commutative algebra. These results rely on two fundamental features, one is that monomial ideals have combinatorial features that allow many invariants to be computed effectively, and second, the reduction to the initial ideal preserves or bounds these invariants. Thus many of the problems of commutative algebra can be reduced to, or at least bounded by the case of monomial ideals. Of particular interest to us is observation that the betti numbers of an ideal are bounded above by the betti numbers of its initial ideal, in particular the results of Bigatti--Hulett--Pardue~\cite{bigatti,hulett,pardue}.

The theory of virtual resolutions as described in \cite{virtual-res} by Berkesch, Erman, and Smith, provides an alternative description of a free resolution in the case of subvarieties of products of projective spaces or more generally, smooth toric varieties.

\begin{definition*}[\cite{virtual-res}*{Definition 1.1}]
  Given a smooth toric variety $X=X(\Sigma)$ and a $\Pic(X)$-graded module $M$, then a free complex $F$ of graded $\field[\Sigma]$-modules is a \emph{virtual resolution of $M$} if the corresponding complex $\widetilde{F}$ of vector bundles on $X$ is a resolution of $\widetilde{M}$.
\end{definition*}

Their original paper provides the following analogy to Hilbert's syzygy theorem which we attempt to generalize to the case of virtual resolutions on toric varieties. For $\mathbf{n}\in\bbN^s$ define $\bbP^{\mathbf{n}}=\bbP^{n_1}\times \cdots \times \bbP^{n_s}$.

\begin{prop*}[\cite{virtual-res}*{Proposition~1.2}]
  Given a smooth toric variety $X(\Sigma)$ with irrelevant ideal $B$.
  Every finitely generated $B$-saturated $S$-module on $\bbP^{\mathbf{n}}$ has a virtual resolution of length at most $\left|\mathbf{n}\right|$.
\end{prop*}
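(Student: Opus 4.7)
The approach I would take is to start from a minimal free resolution of a modification of $M$ and then prune it virtually. Since replacing $M$ by its truncation $M_{\geq \mathbf{d}}$ for any $\mathbf{d} \in \bbZ^s$ preserves the sheafification $\widetilde{M}$, I would choose $\mathbf{d}$ beyond the multigraded Castelnuovo--Mumford regularity of $M$ in the sense of Maclagan--Smith. This gives tight control over the degrees appearing in the minimal free resolution $F_\bullet$ of $M_{\geq \mathbf{d}}$. By Hilbert's syzygy theorem for the Cox ring $S = \field[x_{i,j}]$, this resolution has length at most $\dim S = \left|\mathbf{n}\right| + s$, so the task is to remove the tail of length $s$ without disturbing the sheafification.

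For the pruning itself, the plan is to form the brutal truncation $G_\bullet$ at homological position $\left|\mathbf{n}\right|$, replacing $F_{\left|\mathbf{n}\right|}$ by the image of the differential out of $F_{\left|\mathbf{n}\right|}$ (equivalently the $(\left|\mathbf{n}\right|-1)$-st syzygy), and discarding all higher $F_i$. I would then verify that the regularity assumption on $\mathbf{d}$ forces the generators of each $F_i$ for $i > \left|\mathbf{n}\right|$ into degree ranges whose associated line bundles on $\bbP^{\mathbf{n}}$ have vanishing higher cohomology, so that the discarded tail sheafifies to an exact complex. An alternative, perhaps more conceptual, route is to use the Beilinson-type resolution of the diagonal on $\bbP^{\mathbf{n}}$, which has length exactly $\left|\mathbf{n}\right|$, apply the corresponding integral transform to $\widetilde{M}$, and then lift the resulting complex of locally free sheaves to free $S$-modules to obtain a virtual resolution of the required length directly.

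The main obstacle I anticipate is making the pruning rigorous: one must show that the $\left|\mathbf{n}\right|$-th syzygy module agrees with a free module after saturation with respect to $B = \bigcap_i (x_{i,0}, \ldots, x_{i,n_i})$, so that $\widetilde{G_\bullet}$ still resolves $\widetilde{M}$ on $X = \bbP^{\mathbf{n}}$. Unlike the single projective space case, where $B$-saturatedness alone reduces the projective dimension by $1$, here $s$ syzygy steps must be shed, and tracking the multigraded degrees through each pruning step is the combinatorial core of the argument. The Beilinson approach sidesteps the combinatorics but transfers the difficulty to the lifting problem from sheaves back to modules, where one again needs careful control of multidegrees to ensure the resulting complex of free $S$-modules has the right length.
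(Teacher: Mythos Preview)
This proposition is quoted from \cite{virtual-res} and is not proved in the present paper. The only information the paper offers about its proof is the remark immediately following the statement: ``The proof of the previous proposition uses Beilinson's resolution of the diagonal, and is not immediately amenable to generalization to the toric case.'' So the argument in the source is precisely your \emph{alternative} route --- the Beilinson-type resolution of the diagonal on $\bbP^{\mathbf{n}}$ followed by a lift back to free $S$-modules --- rather than your primary truncation-and-pruning strategy.

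Regarding that primary strategy, the obstacle you flag is genuine and not merely bookkeeping. After brutally truncating the minimal free resolution of $M_{\geq \mathbf{d}}$ at homological position $|\mathbf{n}|$, the module sitting in the top spot is a syzygy module $\Omega$, and for the truncated complex to be (replaceable by) a virtual resolution you need $\widetilde{\Omega}$ to be a direct sum of line bundles on $\bbP^{\mathbf{n}}$. Showing that the discarded tail sheafifies to an exact complex only gives that $\widetilde{\Omega}$ is locally free; splitting it into line bundles is a separate, nontrivial statement that does not follow from degree bounds alone, already on a single $\bbP^n$. The Beilinson argument used in \cite{virtual-res} avoids this, and the construction in that paper that more closely resembles your pruning idea (the ``virtual resolution of the pair'') works by discarding summands of each $F_i$ according to multidegree rather than by truncating homologically, which is how it sidesteps the syzygy-splitting problem.
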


Given this, a natural question, stated as Question~7.5 in \cite{virtual-res} but phrased here as a conjecture, is to ask whether such a statement is true for arbitrary smooth toric varieties.

\begin{conjecture}
  \label{conj:virtual-hilbert-syzygy}
  Given a smooth toric variety $X(\Sigma)$ with irrelevant ideal $B$, and a $B$-saturated ideal $I$, $\vpdim S/I \leq n$, where \[\vpdim M = \min\left\{\length(F_{\bullet}) \mid F_{\bullet}\text{ a virtual resolution of } M\right\}.\]
\end{conjecture}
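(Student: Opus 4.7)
The plan is to reduce to the monomial case via Gröbner degeneration and then prove that case by pruning a cellular resolution of the monomial ideal. For the first step, fix a term order $\prec$ and let $J$ be the $B$-saturation of the initial ideal $\mathrm{in}_\prec(I)$. I would argue that $\vpdim S/I \leq \vpdim S/J$ via the usual Rees-family flat degeneration: a virtual resolution of $S/J$ should lift to a virtual resolution of $S/I$ of the same length, provided one works throughout with the $B$-saturation functor to account for the fact that $\mathrm{in}_\prec(I)$ need not be $B$-saturated even when $I$ is.

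For the monomial case, let $I$ be a $B$-saturated monomial ideal. Start from a cellular resolution $F_\bullet(\Delta)$ of $S/I$ supported on a labeled CW-complex $\Delta$, such as the hull complex or Taylor complex. Prune $\Delta$ to a subcomplex $\Delta'$ consisting of those faces $\sigma$ whose monomial label $m_\sigma$ is not annihilated by a high power of $B$---equivalently, whose associated free summand has nonzero sheafification on $X$. The goal is to show that the restricted chain complex $F_\bullet(\Delta')$ is still a free complex with well-defined differentials and remains exact after sheafification, i.e., is a virtual resolution of $S/I$.

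The combinatorial heart of the argument is the bound $\dim \Delta' \leq n = \dim X(\Sigma)$. Since the irrelevant ideal decomposes as $B = \bigcap_{\sigma\in\Sigma_{\max}}\langle x_\rho \mid \rho \notin \sigma\rangle$, a label that uses too many variables $x_\rho$ outside some maximal cone $\sigma$ forces the corresponding summand into $B$-torsion and therefore disappears from $\Delta'$. Translating this into a dimension bound on faces requires choosing $\Delta$ compatibly with $\Sigma$: for $\bbP^{\mathbf{n}}$ the product of simplices yields cells of dimension at most $|\mathbf{n}|$, and for general smooth $X(\Sigma)$ one needs an analogous cellular construction whose face lattice reflects the combinatorics of the fan.

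The hard part will be verifying that $F_\bullet(\Delta')$ is genuinely a virtual resolution after pruning. One must check that the cellular differentials restricted to $\Delta'$ remain well-defined (or differ from the true differentials by $B$-torsion), and that the homology in positive degrees is $B$-torsion. This amounts to a careful analysis of how cellular syzygies interact with the primary decomposition of $B$ at each multidegree, and is precisely where the monomial combinatorics of $I$ must be made to mesh with the fan combinatorics of $\Sigma$.
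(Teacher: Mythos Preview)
The statement you are attempting is stated in the paper as a \emph{conjecture}; the paper does not prove it. The paper only proves the monomial case (Theorem~\ref{thm:main-theorem}) and explicitly identifies your first step---the Gr\"obner degeneration reduction---as the missing ingredient, noting that one would need a virtual analogue of $\beta_{i,j}(S/I)\leq\beta_{i,j}(S/\operatorname{in}(I))$ and that ``significant work remains to prove the conjecture.'' Your assertion that a virtual resolution of $S/J$ lifts along the Rees family to one of $S/I$ is not justified: virtual resolutions are not unique, there is no minimality characterization to transport, and $B$-saturation does not commute with specialization in any obvious way. This step is a genuine open problem, not a routine lemma you can sketch past.

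Your monomial argument is also flawed and differs substantially from what the paper actually does. The pruning criterion is ill-defined: every free summand $S(-a)$ in a cellular resolution sheafifies to a nonzero line bundle $\mathcal{O}_X(a)$, so ``faces whose associated free summand has nonzero sheafification'' removes nothing. Even granting some corrected criterion, the Taylor and hull complexes have dimension governed by the number of generators or ambient variables, not by $\dim X$, so the bound $\dim\Delta'\leq n$ does not follow from the heuristic you give. The paper takes a structurally different route: it uses the cell complex \emph{dual to the fan} (hence of dimension $n$ by construction), labels its cells by monomial \emph{ideals} $I_\sigma=I\cap\langle x_{\widehat\sigma}^k\rangle$ rather than single monomials, then passes to a codimension-one subcomplex $\widetilde\Delta$ with refined labels $J_\sigma$ and controls the total complex via Lemmas~\ref{lem:j-intersect}--\ref{lem:j-saturation}. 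The output is an honest free resolution of a new ideal $J$ with $J:B^\infty=I$ and $\pdim S/J\leq n$, not a pruned complex that is merely exact after sheafification.
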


The proof of the previous proposition uses Beilinson's resolution of the diagonal, and is not immediately amenable to generalization to the toric case. Instead, we use the fact that a free resolution of $S/J$ is a virtual resolution of $S/I$ if $I=J:B^{\infty}$. With this in mind, we now state the main theorem of this paper.

\begin{theorem}
  \label{thm:main-theorem}
  Let $I$ be a non-irrelevant $B$-saturated monomial ideal on a complete simplicial $n$-dimensional normal toric variety. Then there exists a monomial ideal $J$ with $I=J:B^{\infty}$ with $\pdim S/J\leq n$.
\end{theorem}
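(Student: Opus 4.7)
The plan is to construct $J$ by enlarging $I$ with additional primary components supported on the irrelevant locus $V(B)$, chosen so that the enlarged ideal admits a cellular resolution of length at most $n$ supported on the simplicial structure of the fan. The key formal observation is that for any monomial ideal $C$ with $C : B^\infty = S$ (equivalently, $V(C) \subseteq V(B)$), saturation commutes with intersection, so $(I \cap C) : B^\infty = (I : B^\infty) \cap (C : B^\infty) = I \cap S = I$. Thus for any such $C$ the ideal $J = I \cap C$ automatically satisfies the required saturation identity, and the entire difficulty is confined to choosing $C$ to force $\pdim S/(I \cap C) \leq n$.

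A natural first candidate is $C = B$ itself. Since $B = \bigcap_\tau P_\tau$ as $\tau$ ranges over the minimal non-faces of $\Sigma$ (each satisfying $P_\tau \supseteq B$), this choice enlarges the set of associated primes of $I$ by adding primes tied to the combinatorics of $\Sigma$ rather than arbitrary ones. Small examples on $\bbP^n$ and $\bbP^1 \times \bbP^1$ suggest $J = I \cap B$ already suffices. If it does not in general, I would refine the construction by replacing $B$ with $\bigcap_\tau P_\tau^{a_\tau}$, where the exponents $a_\tau$ are matched to the primary decomposition of $I$, so that the generators of $J$ are naturally indexed by the faces of $\Sigma$.

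The heart of the argument is to exhibit a cellular resolution of $S/J$ supported on a simplicial complex of dimension at most $n-1$. Viewing $\Sigma$ as an abstract simplicial complex on its set of rays, its dimension is exactly $n-1$ because the fan is $n$-dimensional and simplicial, so each maximal cone contains exactly $n$ rays; a cellular complex supported on it therefore has length at most $n$. Labeling each face $\tau \in \Sigma$ by an appropriate monomial $m_\tau$ (roughly, the $\lcm$ of the generators of $J$ supported on the rays of $\tau$) produces the candidate complex. The main technical obstacle is exactness: by the standard Bayer--Sturmfels / Miller--Sturmfels acyclicity criterion, one must check that for each monomial $x^{\mathbf{a}}$ the subcomplex $\{\tau \in \Sigma : m_\tau \mid x^{\mathbf{a}}\}$ has vanishing reduced homology. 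The key topological input here is that complete simplicial fans triangulate an $(n-1)$-sphere, so the relevant sub-complexes of $\Sigma$ are topologically tractable, and combined with the combinatorial structure of the labels this should yield the desired acyclicity.
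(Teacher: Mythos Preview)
Your plan has a genuine topological obstruction that the paper's proof is designed precisely to avoid. You propose supporting the cellular resolution on $\Sigma$ itself, viewed as a simplicial complex on the rays. Since the fan is complete and simplicial, this complex triangulates an $(n-1)$-sphere, not a disk. The Bayer--Sturmfels acyclicity criterion requires every subcomplex $\Sigma_{\leq \mathbf{a}}$ to be $\field$-acyclic, but for $\mathbf{a}$ dominating every vertex label one gets $\Sigma_{\leq \mathbf{a}} = \Sigma \simeq S^{n-1}$, which has $\widetilde{H}_{n-1} = \field \neq 0$. So the complex is never exact at the top, and no choice of monomial labels on the rays produces a resolution. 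Your closing remark that ``complete simplicial fans triangulate an $(n-1)$-sphere'' is exactly the obstruction, not the tool. There is also a mismatch between the two halves of your plan: an ideal of the shape $J = I\cap C$ will typically have far more minimal generators than there are rays, so it cannot be the degree-zero part of a monomial-labeled cellular complex on $\Sigma$.

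The paper circumvents this in two linked ways that are absent from your outline. First, it works on the \emph{dual} cell complex $\Delta$ (the polytope when $\Sigma$ is polytopal), which is an $n$-ball; vertices correspond to maximal cones, and the labels are monomial \emph{ideals} $I_\sigma = I \cap \langle x_{\widehat{\sigma}}^k\rangle$ rather than single monomials. Taking the total complex already gives $\pdim S/(I\cap B^{[k]}) \leq n+1$ (Lemma~\ref{lem:short}); this is the correct analogue of your $J = I\cap C$ idea, with $C = B^{[k]}$. Second, to shave off the last step, the paper fixes a ray $\tau$, passes to the contractible subcomplex $\widetilde{\Delta}$ of cells $[\sigma]$ with $\sigma\cup\{\tau\}$ not a cone, and relabels by ideals $J_\sigma = x_{\widehat{\sigma\cup\{\tau\}}}^k\cdot \bigcap_{\gamma\in S(\sigma)} (I:x_{\widehat{\gamma}}^\infty)$. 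The decisive technical input is Lemma~\ref{lem:j-pdim}: each $J_\sigma$ satisfies $\pdim J_\sigma \leq \dim\sigma - 1$, proved via Auslander--Buchsbaum by exhibiting an explicit nonzerodivisor on $R/\widetilde{J}_\sigma$. This ``one less than the naive bound'' on every label is what makes the total complex land at length $n$ instead of $n+1$, and it has no counterpart in a monomial-labeled framework.
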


The proof of this theorem is loosely inspired instead by Theorem~5.1 in \cite{virtual-res}.

\begin{theorem*}[\cite{virtual-res}*{Theorem~5.1}]
  If $Z\subset \bbP^{\mathbf{n}}$ is a zero dimensional punctual scheme with corresponding $B$-saturated ideal $I$ then there exists $\mathbf{a}\in \bbN^s$ with $a_r=0$ such that $\pdim S/(I\cap B^{\mathbf{a}}) \leq \left|\mathbf{n}\right|=n_1+\cdots+n_s$.
\end{theorem*}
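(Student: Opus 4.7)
Because $Z$ is a zero-dimensional punctual scheme, its support is a single closed point $p\in\bbP^{\mathbf{n}}$, and the $B$-saturated ideal $I$ is the multigraded homogenization of an $\mathfrak{m}$-primary ideal $\mathfrak{a}$ of finite colength $\ell$ in the local ring at $p$. After a linear change of coordinates in each $\bbP^{n_j}$, I may assume $p$ is the coordinate point $([1{:}0{:}\cdots{:}0],\ldots,[1{:}0{:}\cdots{:}0])$, with local affine coordinates $u_{j,k}=x_{j,k}/x_{j,0}$.

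The plan is to produce $\mathbf{a}$ with $a_r=0$ such that $S/J$ admits a regular sequence of length $s$, where $J:=I\cap B^{\mathbf{a}}$; since $S$ has $|\mathbf{n}|+s$ variables, Auslander--Buchsbaum then yields $\pdim_S S/J\le|\mathbf{n}|$. The first regular element is $x_{r,0}$: it is a nonzerodivisor on $S/I$ because the affine cone over $Z$ does not meet the hyperplane $\{x_{r,0}=0\}$ (as $p$ lies in the chart $\{x_{r,0}\ne 0\}$), and it is a nonzerodivisor on $S/B^{\mathbf{a}}$ because $a_r=0$ makes $B^{\mathbf{a}}$ entirely free of the $r$-th factor's variables. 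From $I\cap B^{\mathbf{a}}\subseteq I$ and $I\cap B^{\mathbf{a}}\subseteq B^{\mathbf{a}}$, it follows directly that $x_{r,0}$ is a nonzerodivisor on $S/J$ as well. For each $j\ne r$, I would then choose $a_j$ large enough (dependent on $\ell$) so that a generic linear form in $x_{j,\bullet}$ is a nonzerodivisor on the quotient of $S/J$ by the regular elements chosen so far, using that locally at $p$ the ideal generated by these elements becomes $\mathfrak{m}$-primary.

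The main obstacle is making the regular-sequence argument for the $s-1$ linear forms in the factors $j\ne r$ rigorous. I would reduce to a combinatorial question by Gr\"obner-degenerating $I$ to its $\bbN^s$-graded initial ideal $\mathrm{in}(I)$, invoking the Bigatti--Hulett--Pardue inequality $\pdim S/I\le\pdim S/\mathrm{in}(I)$ mentioned in the introduction to control projective dimension, so that the question reduces to bookkeeping on the staircase of $\mathfrak{a}$. A delicate point is that $\mathrm{in}(I)$ need not remain $B$-saturated, but because $Z$ is zero-dimensional and punctual, $\mathrm{in}(I)$ has the same support at $p$, and the intersection with the monomial ideal $B^{\mathbf{a}}$ absorbs any $B$-torsion discrepancy between $I$ and $\mathrm{in}(I)$. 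Conceptually, $a_r=0$ records that the $r$-th factor already contributes one unit of depth via $x_{r,0}$ without any further thickening, precisely because zero-dimensionality forces the support of $Z$ to project to a single scheme-theoretic point in $\bbP^{n_r}$.
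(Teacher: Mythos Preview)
This paper does not contain a proof of the stated theorem. The theorem is quoted from \cite{virtual-res} as motivation and inspiration for the paper's own main result (Theorem~\ref{thm:main-theorem}), which concerns \emph{monomial} ideals on arbitrary smooth toric varieties and is proved by an entirely different method based on labeled cellular resolutions. So there is no ``paper's own proof'' to compare against; the result is simply cited.

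Regarding your proposal itself: it is not a proof but an outline with an explicitly acknowledged gap. You yourself write that ``the main obstacle is making the regular-sequence argument for the $s-1$ linear forms in the factors $j\ne r$ rigorous,'' and the Gr\"obner-degeneration sketch you offer does not close this gap. Passing to $\operatorname{in}(I)$ controls $\pdim S/I$, but what you need to bound is $\pdim S/(I\cap B^{\mathbf{a}})$, and initial ideals do not commute with intersection in general; the sentence about $B^{\mathbf{a}}$ ``absorbing any $B$-torsion discrepancy'' is not an argument. Moreover, your reading of ``punctual'' as ``supported at a single closed point'' should be checked against \cite{virtual-res}; if $Z$ is an arbitrary finite subscheme, the coordinate change placing all points at a single coordinate point is unavailable, and the claim that $x_{r,0}$ is a nonzerodivisor on $S/I$ requires a different justification (e.g.\ a generic linear form in the $r$-th block). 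The one step that is genuinely clean is the observation that a nonzerodivisor on both $S/I$ and $S/B^{\mathbf{a}}$ is automatically a nonzerodivisor on $S/(I\cap B^{\mathbf{a}})$.
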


As in the above theorem, it will turn out that the specific case that is of concern to us is when the virtual resolution is in fact a free resolution, but of a different ideal. This ideal will be constructed by starting with the original ideal intersected with a certain power of the irrelevant ideal, in this case the bracket power. Unfortunately as we will see in Lemma~\ref{lem:short}, this only gets us to $\pdim S/J\leq n+1$. Fortunately, there will be a reduction step that allows us to modify this to a new monomial ideal, with $\pdim S/J\leq n$.

The eventual goal is to have some degeneration theory to reduce Conjecture~\ref{conj:virtual-hilbert-syzygy} to Theorem~\ref{thm:main-theorem}. In particular, it would suffice to have an analogue of the following result.

\begin{theorem*}[\cite{combinatorial-ca}*{Theorem~8.29}]
  For $S$ a polynomial ring, and $I$ an ideal,
  \[\beta_{i,j}(S/I)\leq \beta_{i,j}(S/\operatorname{in}(I)).\]
\end{theorem*}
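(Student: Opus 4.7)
The plan is to realize $S/I$ and $S/\operatorname{in}(I)$ as the generic and special fibers of a single graded flat family over $\operatorname{Spec}\field[t]$, and to deduce the inequality from upper semicontinuity of fiber dimensions of $\operatorname{Tor}$ in that family.

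First I would fix a weight vector $\omega \in \bbZ^n_{\geq 0}$ refining the given term order on a Gr\"obner basis $G = \{g_1,\ldots,g_r\}$ of $I$, so that $\operatorname{in}_\omega(g_i) = \operatorname{in}(g_i)$ for each $i$. For $g_i = \sum_\alpha c_\alpha x^\alpha$ of top $\omega$-weight $M_i$, form the Gr\"obner deformation
\[
  \tilde g_i \;=\; \sum_\alpha c_\alpha\, t^{M_i - \omega\cdot\alpha}\, x^\alpha \;\in\; S[t],
\]
let $\tilde I = (\tilde g_1, \ldots, \tilde g_r)$, and set $R := S[t]/\tilde I$. Each $\tilde g_i$ is homogeneous with respect to the original $S$-grading on the $x_i$ (with $t$ of degree zero), so $R$ is a graded $\field[t]$-algebra, and a direct computation shows $R/(t) \cong S/\operatorname{in}(I)$ and $R/(t-c) \cong S/I$ as graded $S$-modules for every $c \neq 0$.

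Next I would verify that $R$ is flat over $\field[t]$. This is the main technical step, and it is exactly where the Gr\"obner-basis hypothesis on $G$ is essential: the Gr\"obner property ensures that $R$ has no $t$-torsion — any $f \in S[t]$ with $tf \in \tilde I$ must itself lie in $\tilde I$, by a normal-form reduction against $G$ — and over the PID $\field[t]$ torsion-freeness is equivalent to flatness. With flatness in hand, for each homological index $i$ and internal degree $j$ the module
\[
  T_{i,j} \;:=\; \operatorname{Tor}_i^S(R, \field)_j
\]
is a finitely generated graded $\field[t]$-module whose specialization at $t = c$ computes $\operatorname{Tor}_i^S(R/(t-c), \field)_j$; its generic fiber dimension is therefore $\beta_{i,j}(S/I)$, while its fiber at $t = 0$ has dimension $\beta_{i,j}(S/\operatorname{in}(I))$.

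Finally, by the structure theorem for finitely generated modules over the PID $\field[t]$, fiber dimensions can only jump upward upon specialization to a torsion point; applying this at $t = 0$ gives
\[
  \beta_{i,j}(S/I) \;\leq\; \beta_{i,j}(S/\operatorname{in}(I)),
\]
as desired. I expect the main obstacle to be the careful verification of $\field[t]$-flatness for the Gr\"obner family, since once flatness is established, the Betti-number inequality follows formally from upper semicontinuity.
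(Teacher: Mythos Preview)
The paper does not prove this statement; it is quoted from \cite{combinatorial-ca} purely as motivation, with no argument supplied. There is therefore nothing in the paper to compare against.

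Your Gr\"obner-degeneration approach is the standard one and is essentially what appears in the cited reference. One technical point deserves care: your claim that the fiber of $T_{i,j}=\operatorname{Tor}_i^S(R,\field)_j$ at $t=0$ has dimension exactly $\beta_{i,j}(S/\operatorname{in}(I))$ is not literally correct, since taking homology does not commute with the base change $-\otimes_{\field[t]}\field[t]/(t)$. The universal coefficient theorem gives
\[
\beta_{i,j}(S/\operatorname{in}(I)) \;=\; \dim_\field\bigl(T_{i,j}\otimes_{\field[t]}\field\bigr) \;+\; \dim_\field\operatorname{Tor}_1^{\field[t]}\!\bigl(T_{i-1,j},\field\bigr),
\]
so the fiber dimension of $T_{i,j}$ at $0$ is only a lower bound for $\beta_{i,j}(S/\operatorname{in}(I))$. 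This is harmless for the desired inequality (the extra term only helps), and the cleanest fix is to bypass $T_{i,j}$ altogether: work directly with the complex $(K_\bullet\otimes_S R)_j$ of finitely generated free $\field[t]$-modules, where $K_\bullet$ is the Koszul complex on the variables, and invoke upper semicontinuity of
\[
c \;\longmapsto\; \dim_\field H_i\Bigl((K_\bullet\otimes_S R)_j \otimes_{\field[t]} \field[t]/(t-c)\Bigr),
\]
which follows from lower semicontinuity of the ranks of the differentials.
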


 Even in the absence of such a theorem, Theorem~\ref{thm:main-theorem} still provides support for Conjecture~\ref{conj:virtual-hilbert-syzygy}. However, significant work remains to prove the conjecture.

\section{Acknowledgments}

The author would like to thank his PhD advisor Daniel Erman, and his postdoc mentor Christine Berkesh for their helpful conversations and advice. In particular I'd like to thank Christine for pointing out that one can use monomial ideal labels in a labeled cell complex. I'd also like to thank Mike Loper for our conversations as I worked out the details of the paper. Finally, I'd like to thank the anonymous reviewer of this paper for their comments and for their patience as I rewrote a large part of the paper.

\section{Notations and Conventions}

For the most part, we use standard notation for fans and cones. However, for the purposes of this paper, we will regard a cone as the set of extremal rays. Then as is standard, a fan $\Sigma$ is a finite collection of strongly convex rational polyhedral cones such that every face of a cone in $\Sigma$ is in $\Sigma$ and the intersection of any two cones in $\Sigma$ is a face of both cones. The set $\Sigma(n)$ is the set of $n$-dimensional cones. For a ray $\tau$, we denote the corresponding variable in the Cox ring by $x_{\tau}\in \field[\Sigma]$. Similarly, for a collection of rays $\sigma$ let $x_{\sigma}:=\prod_{\tau\in \sigma}x_{\tau}$ and $x_{\widehat{\sigma}}:=\prod_{\tau\notin\sigma}x_{\tau}$.


Finally, For cells $F$,$G$ in a cell complex, we will use $F\faceof G$ to denote that $F$ is a face of $G$.
  
\section{Cellular Resolutions}

Our main tool in this paper will be cellular resolutions. The underlying concepts for cellular resolutions were first described in the case of simplicial complexes by Bayer, Peeva, and Sturmfels in their paper Monomial Resolutions~\cite{monomial-res}. This was later generalized to cell complexes by Bayer and Sturmfels in~\cite{cellular-res}. We however will need a slight generalization, which itself is a special case of a generalization described by Ezra Miller in \cite{miller-cm}, where we allow for the associated cell complex to be labeled by monomial ideals.

\begin{definition}
  A \emph{labeled cell complex} $(\Delta,I_{\bullet})$ is a cell complex $\Delta$ together with a collection of monomial ideals $\{I_{F}\}_{F\in\Delta}$ with $I_{F}\subset I_{G}$ for $G\faceof F$.
\end{definition}

The following lemma is modeled after \cite[Definition~6.7]{cb-hypergeometric} and \cite[Definition~3.2]{miller-cm}.

\begin{lemma}
  \label{lem:cell-complex}
  Fix a polynomial ring $S=\field[x_1,\ldots,x_n]$ with the fine grading by $\bbZ^n$. Let $\Delta$ be a labeled cell complex with $\mathcal{E}(F,G)$ the attaching degree from the cell $G$ onto the cell $F$. then for $\alpha\in\bbZ^n$ define the subcomplex of $\Delta$ whose cells are given by $\Delta_{\alpha}=\left\{F\, |\, (I_{F})_{\alpha}\neq 0\right\}$. Then the chain complex
  \[C_{\Delta}:0\rightarrow \bigoplus_{\dim F=n} I_{F}\rightarrow \cdots \rightarrow \bigoplus_{\dim F=1} I_{F}\rightarrow \bigoplus_{\dim F=0} I_{F}\rightarrow 0,\]
  with boundary maps
  \[(a_{F})_{\dim F=i}\mapsto \left(\sum_{\dim F=i} \mathcal{E}(F,G)a_{F}\right)_{\dim G =  i-1},\]
  has homology given by the following formula
  \[H_{i}(C_{\Delta})_m = H_{i}(\Delta_{m},\field).\]
\end{lemma}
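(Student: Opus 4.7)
The plan is to compute the homology of $C_\Delta$ one multidegree at a time and recognize each graded strand as the ordinary cellular chain complex of the subcomplex $\Delta_m$ with coefficients in $\field$.

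Fix $m \in \bbZ^n$ and consider $(C_\Delta)_m$. Because each $I_F$ is a monomial ideal, the graded piece $(I_F)_m$ is either zero or the one-dimensional $\field$-span of the monomial $x^m$, according as $x^m \notin I_F$ or $x^m \in I_F$; the latter is precisely the condition $F \in \Delta_m$. So as a graded $\field$-vector space, $(C_\Delta)_m$ is canonically identified with $\bigoplus_{F \in \Delta_m} \field$, with the $F$-summand placed in homological degree $\dim F$.

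Next I would check that the hypothesis $I_F \subset I_G$ for $G \faceof F$ does the two pieces of work one needs. It makes the maps $I_F \to I_G$ appearing in the differential into genuine (scaled) inclusions, and it forces $\Delta_m$ to be closed under taking faces: if $F \in \Delta_m$ and $G \faceof F$, then $x^m \in I_F \subset I_G$, so $G \in \Delta_m$, and hence $\Delta_m$ is an honest subcomplex of $\Delta$. In multidegree $m$, each such inclusion $I_F \hookrightarrow I_G$ is either the identity on $\field \cdot x^m$ or zero, so when weighted by the attaching degree $\mathcal{E}(F,G)$ these are by construction the cellular boundary maps of $\Delta_m$. One then reads off $H_i(C_\Delta)_m = H_i(\Delta_m, \field)$, which is the desired equality.

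The argument is essentially bookkeeping, with the key structural input being the monomial nature of the labels and the inclusion hypothesis on them; there is no serious obstacle. The one place that deserves care is the comparison of signs, i.e.\ that the incidence coefficients produced by $\mathcal{E}(F,G)$ on the strand $(C_\Delta)_m$ match the cellular boundary signs on $\Delta_m$. This is essentially the definition of the attaching degree, so no new computation is required, but it is precisely the point at which the cell-complex structure of $\Delta$ (as opposed to an arbitrary poset of monomial ideals) enters the proof.
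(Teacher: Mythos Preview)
Your proposal is correct and follows exactly the paper's approach: take the degree-$m$ strand of $C_\Delta$, observe that each $(I_F)_m$ is either $0$ or $\field$, and recognize the result as the cellular chain complex of $\Delta_m$ over $\field$. The paper compresses this into a single sentence, whereas you have spelled out the auxiliary checks (that $\Delta_m$ is closed under faces, that the inclusions $I_F\hookrightarrow I_G$ become identities or zeros in degree $m$, and that the attaching degrees supply the cellular boundary signs); this added detail is appropriate and does not diverge from the paper's argument.
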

\begin{proof}
  Consider a multidegree $\alpha\in\bbZ^n$, then, since $(I_{F})_{\alpha}=\field$ for $F\in \Delta_{\alpha}$ we get that the subcomplex $(C_{\Delta})_{\alpha}$ is given by the cellular chain complex on $\Delta_{\alpha}$ with coefficients in $\field$.
\end{proof}

\begin{remark}
  For a detailed introduction cell complexes, see ~\cite{hatcher}. In particular, the attaching degree mentioned in Lemma~\ref{lem:cell-complex} is the degree of the map given in \cite[Cellular Boundary Formula]{hatcher} and is the same coefficient that shows up in the usual cellular chain complex. 
  For the complexes constructed for this paper, the attaching degrees will always be $1$, $0$, or $-1$, with $\mathcal{E}(F,G)=\pm 1$ if $F\faceof G$ and $\mathcal{E}(F,G)=0$ and where the sign of $\mathcal{E}(F,G)$ is determined by the relative orientation of the faces $F$ and $G$ in the cell complex.
\end{remark}

Then as with cellular resolutions with monomial labels, Lemma~\ref{lem:cell-complex} implies that with an appropriate acyclicity condition, there exists a resolution of the ideal generated by the vertex labels.

\begin{cor}
  \label{cor:resolution}
  If $\Delta$ is a labeled cell complex and $\widetilde{H}_i(\Delta_{\alpha},\field)=0$ for all $\alpha$, then $C_{\Delta}$ is an complex with homology concentrated in homological degree $0$ and $H_0(C_{\Delta})=\sum_{\dim F = 0} I_{F}$.
\end{cor}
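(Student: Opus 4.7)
The plan is to derive everything directly from Lemma~\ref{lem:cell-complex}, which already identifies $H_i(C_\Delta)_\alpha$ with $H_i(\Delta_\alpha,\field)$ in each multidegree $\alpha \in \bbZ^n$. Since unreduced and reduced homology agree in positive degrees, the hypothesis $\widetilde{H}_i(\Delta_\alpha,\field)=0$ for all $\alpha$ immediately gives $H_i(C_\Delta)_\alpha=0$ for $i\geq 1$, and hence $H_i(C_\Delta)=0$ for $i\geq 1$. This handles the claim that $C_\Delta$ is acyclic above degree $0$.

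The remaining task is to identify $H_0(C_\Delta)$ with $\sum_{\dim F = 0} I_F$. I would exhibit the natural map $\varphi\colon \bigoplus_{\dim F=0} I_F \to S$ given by summing the inclusions $I_F \hookrightarrow S$, whose image is precisely the ideal $\sum_{\dim F = 0} I_F$. Since the composition of the last differential of $C_\Delta$ with $\varphi$ is zero (each edge label $I_F$ with $F \prec G, F\prec G'$ sits inside both vertex labels $I_G, I_{G'}$ and the two contributions cancel by the attaching-degree signs), $\varphi$ factors through $H_0(C_\Delta)$, yielding a graded map $\bar\varphi\colon H_0(C_\Delta)\to \sum_{\dim F = 0} I_F$.

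To check $\bar\varphi$ is an isomorphism, I would work one multidegree at a time. If $\Delta_\alpha$ is empty, then no vertex label is nonzero in degree $\alpha$, so both sides vanish. Otherwise the hypothesis $\widetilde{H}_0(\Delta_\alpha,\field)=0$ forces $\Delta_\alpha$ to be connected and nonempty, so by Lemma~\ref{lem:cell-complex} we have $H_0(C_\Delta)_\alpha = H_0(\Delta_\alpha,\field)\cong \field$, while some vertex $F$ of $\Delta_\alpha$ has $(I_F)_\alpha=\field$, so $(\sum I_F)_\alpha = \field$ as well. Under the standard cellular boundary formula, the map from $C_0(\Delta_\alpha)$ to $H_0(\Delta_\alpha,\field)$ is just the augmentation sending each vertex to $1$, which is exactly $\varphi$ in multidegree $\alpha$. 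Hence $\bar\varphi_\alpha$ is an isomorphism in every multidegree.

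No step here is genuinely hard; the only place to be careful is lining up the sign conventions in the attaching degrees so that $\varphi$ really does kill the image of the last differential, but this is exactly the content of the cellular boundary formula already invoked in the lemma.
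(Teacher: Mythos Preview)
Your argument is correct and follows the same line as the paper's proof: both reduce everything to Lemma~\ref{lem:cell-complex} and a multidegree-by-multidegree analysis, using that $\Delta_\alpha$ is nonempty exactly when some vertex label is nonzero in degree $\alpha$ (a consequence of the containment axiom $I_G\subset I_F$ for $F\faceof G$). The only difference is that the paper simply computes $\dim_{\field} H_i(C_\Delta)_\alpha$ in each degree and reads off the answer, whereas you go further and exhibit the augmentation map $\varphi$ explicitly to pin down $H_0(C_\Delta)$ as the ideal $\sum_{\dim F=0} I_F$ rather than merely something with the same graded dimensions; this extra care is a reasonable elaboration, not a different method.
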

\begin{proof}
  Since $\Delta_{\alpha}$ is given by the set of cells $G$ such that $\alpha\in I_{G}$ and $I_G\subset I_{F}$ for $F\faceof G$, we have that $\Delta_{\alpha}\neq \emptyset$ if and only if there exists a cell $F$ with $\dim F=0$ and $x^\alpha\in I_{F}$. Thus by appling Lemma~\ref{lem:cell-complex} gives the following:
  \[H_i(C_\Delta)_{\alpha}=
  \begin{cases}
    \field & \alpha\in \sum_{\dim F = 0} I_{\sigma}\text{ and } i=0,\\
    0 & \text{otherwise}.
  \end{cases}\]
  This implies
  \[
    H_i(C_{\Delta})=\begin{cases}
    \sum_{\dim F = 0} I_{F} & i=0,\\
    0 & \text{otherwise}.
    \end{cases}\]
    Thus the complex $C_{\Delta}$ has homology concentrated in degree $0$, with $H_{0}(C_{\Delta})=\sum_{\dim F = 0} I_{F}$.
\end{proof}

\begin{remark}
  While in a strict homological sense, the complex in Corollary~\ref{cor:resolution} is a resolution, we avoid the term here to avoid confusion with both free resolutions and minimal resolutions.
\end{remark}

\begin{remark}
  \label{rem:induce}
  Given any cell complex $\Delta$ and a labeling of the vertices of $\Delta$ by monomial ideals, we can naturally extend this labeling to a labeling of all cells of $\Delta$ by simply defining the labeling on a cell to be the intersection of the labels on its vertices. Under such a labeling, the induced subcomplex $\Delta_{\alpha}$ is uniquely determined by the set of vertices it contains.
\end{remark}

So far, the construction has exactly mirrored the usual cellular resolution for monomial ideals. But one important difference is that there is no directly analogous minimality result. In the classical case, so long as no cell has the same label as one of its faces, the resulting resolution will be minimal. Since our complex is not a free resolution, to discuss minimality we must first pass to the total complex, but the resulting resolution will rarely be minimal.

\begin{example}
  Let $\Delta_1$ be the $1$-simplex, let $I_1 = (x^2,xy^2)$, $I_2 =(y^2,x^2y)$, and $I_{\left\{1,2\right\}}=(xy^2,x^2y)$, then $I=I_1+I_2 = (x^2,y^2)$.
  Now notice that $\pdim S/I_{\left\{1,2\right\}}=2$, and so the free resolution from the total complex has length at least $3$, but we know that $\pdim S/I=2< 3$.
\end{example}

\section{Short Virtual Resolutions via Bracket Powers}
\label{sec:bracket}
The second component of our technique is an observation that intersecting monomial ideals with a bracket power of the irrelevant ideal leads to simpler resolutions. The bracket power of an ideal denoted $I^{[k]}$ is given by the ideal generated by the image of $I$ under the ring homomorphism given by $x_i\mapsto x_i^{k}$. Conveniently, for monomial ideals this is simply given by the formula $\left<m_1,\ldots,m_s\right>^{[k]}=\left<m_1^k,\ldots,m_s^k\right>$.

\begin{lemma}
  \label{lem:short}
  If $I$ is a monomial ideal and $B$ is the irrelevant ideal of a $n$-dimensional complete normal toric variety, then $\pdim S/(I\cap B^{[k]})\leq n+1$.
\end{lemma}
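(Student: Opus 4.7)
The plan is to build a cellular resolution of $I\cap B^{[k]}$ from a cell complex modeled on the fan $\Sigma$ and to show its total complex has length at most $n$, which will give $\pdim S/(I\cap B^{[k]})\leq n+1$.

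Let $\Delta$ be a regular CW complex homeomorphic to the closed $n$-disk, with one $(n-\dim\tau)$-dimensional cell for each cone $\tau\in\Sigma$ (including the trivial cone $\{0\}$, which gives the unique top $n$-cell). The face relation on $\Delta$ reverses cone containment in $\Sigma$: $\mathrm{cell}(\tau)\faceof \mathrm{cell}(\tau')$ iff $\tau'\subsetneq\tau$ as cones. Label $\mathrm{cell}(\tau)$ by $I_\tau:=I\cap (x_{\widehat{\tau}}^{k})$, where $x_{\widehat{\tau}}^{k}:=\prod_{\rho\notin\tau}x_\rho^{k}$. The containment $I_{\tau'}\subset I_{\tau}$ whenever $\tau'\subsetneq\tau$ is immediate, and the vertex labels (over maximal cones) satisfy $\sum_{\sigma\in\Sigma(n)}I_\sigma=I\cap B^{[k]}$ by distributivity of intersection over sums of monomial ideals.

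First I would verify the acyclicity hypothesis of Corollary~\ref{cor:resolution}. Set $S_\alpha:=\{\rho:\alpha_\rho<k\}$. A direct calculation shows that $\tau\in\Delta_\alpha$ iff $S_\alpha\subset\tau$ and $x^\alpha\in I$. When both hold, $S_\alpha$ spans a cone $\tau_0\in\Sigma$ (possibly $\tau_0=\{0\}$ when $S_\alpha=\emptyset$), and $\Delta_\alpha$ is the closed cell $\overline{\mathrm{cell}(\tau_0)}\subset \Delta$, a topological disk and hence contractible. Otherwise $\Delta_\alpha=\emptyset$, consistent with $x^\alpha\notin I\cap B^{[k]}$.

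Second I would bound the length of the total complex obtained by freely resolving each $I_\tau$. Since $I_\tau=x_{\widehat{\tau}}^{k}\cdot(I:x_{\widehat{\tau}}^{k})$, the module $I_\tau$ is isomorphic up to degree shift to the colon ideal $(I:x_{\widehat{\tau}}^{k})$. For $k$ at least the maximum exponent appearing in the generators of $I$, this colon reduces to the restriction $I^\tau$ of $I$ to $\field[x_\rho:\rho\in\tau]$, an ideal in a polynomial ring in $\dim\tau$ variables, so $\pdim I^\tau\leq\max(\dim\tau-1,0)$. Combining with the cell dimension $n-\dim\tau$, each cell contributes at most $n$ to the length of the total complex, giving $\pdim(I\cap B^{[k]})\leq n$ and hence $\pdim S/(I\cap B^{[k]})\leq n+1$.

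The main obstacle is bounding the projective dimension of $I_\tau$ uniformly in $k$. The clean bound $\pdim I_\tau\leq\max(\dim\tau-1,0)$ requires $k$ to dominate the exponents in the generators of $I$; for smaller $k$, the colon ideal $(I:x_{\widehat{\tau}}^k)$ involves variables outside $\tau$ and a more delicate analysis---tracking how the non-minimal pieces of the total complex cancel---is needed to show that the effective length is still at most $n$.
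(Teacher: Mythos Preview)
Your argument is essentially the paper's: same dual-to-the-fan cell complex $\Delta$, same labels $I_\tau=I\cap\langle x_{\widehat\tau}^k\rangle$, same identification of each nonempty $\Delta_\alpha$ as a closed cell (the paper phrases this via $\tau_0=\bigcap_{[\sigma]\in\Delta_\alpha}\sigma$ rather than your $S_\alpha$, which also sidesteps having to check that $S_\alpha$ itself is a cone in the non-simplicial case), and the same total-complex length bound via Hilbert's syzygy theorem applied to $(I:x_{\widehat\tau}^k)$. The obstacle you flag for small $k$ is not a gap relative to the paper---its proof also opens with ``for $k$ sufficiently large'' so that each $I_\sigma$ is, up to the monomial factor $x_{\widehat\sigma}^k$, an ideal in $\field[x_\rho:\rho\in\sigma]$, and only such $k$ are needed for the subsequent applications.
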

\begin{proof}
  The irrelevant ideal $B$ is generated by the monomials corresponding to the complements of cones in $\Sigma$. In particular,
  \[B=\left<x_{\widehat{\sigma}}|\sigma \in \Sigma \right>=\sum_{\sigma\in\Sigma}\left<x_{\widehat{\sigma}}\right>.\]
  Thus for $k$ sufficiently large, all the generators of $I\cap B^{[k]}$ are of the form $x_{\widehat{\sigma}}^k\prod_{\tau\in \sigma}x_\tau^{a_{\tau}}$ with $a_{\tau}\leq k$. Then there exists a decomposition of $I\cap B^{[k]}=\sum_{\sigma\in\Sigma} I_{\sigma}$ with $I_{\sigma}=I\cap \left<x_{\widehat{\sigma}}^k\right>$. To start, consider $I_{\sigma\cap\sigma'}$.
  \begin{equation}
    \label{eq:intersect}
    I_{\sigma\cap\sigma'}=\left<x_{\widehat{\sigma\cap\sigma'}}^k\right>\cap I=\left(\left<x_{\widehat{\sigma}}^k\right>\cap I\right)\cap \left(\left<x_{\widehat{\sigma'}}^k\right>\cap I\right)=I_{\sigma}\cap I_{\sigma'}.
  \end{equation}
  
  Now consider the cell complex $\Delta$ that is dual to the poset of cones. In the case where $\Sigma$ is normal fan of a polytope, this corresponds to that polytope, but $\Sigma$ need not be the normal fan of a polytope for this cell complex to exist. Denote the cells of the complex by $[\sigma]$ for $\sigma\in \Sigma$. This is a labeled cell complex, with labels given by the $I_{\sigma}$, allowing us to apply Lemma~\ref{lem:cell-complex}. This gives the following complex:

  \begin{equation}
    \label{eq:complex}
    0\rightarrow \bigoplus_{\dim \sigma=0} I_{\sigma}\rightarrow\cdots \rightarrow \bigoplus_{\dim \sigma=n-1} I_{\sigma}\rightarrow\bigoplus_{\dim \sigma=n} I_{\sigma}\rightarrow I.
  \end{equation}

  To show this complex is exact it suffices to show that all subcomplexes associated to a monomial are contractable. Fix some monomial $m$, then since each of the $I_{\sigma}$ are divisible by $x_{\widehat{\sigma}}^k$, if $\sigma\in\Delta_m$, then $x_{\widehat{\sigma}}^k\mid m$.

  Note that (\ref{eq:intersect}) implies if $[\sigma],[\sigma']\in\Delta_m$, then $[\sigma\cap\sigma']\in \Delta_{m}$. Now let $\tau=\bigcap_{[\sigma]\in\Delta_m} \sigma$, then $[\tau]\in\Delta_m$. Since $\tau\subset \sigma$ for $[\sigma]\in \Delta_m$, we have $[\sigma]\faceof [\tau]$. As such $\Delta_m\subset \overline{[\tau]}$. Thus $\Delta_m=\overline{[\tau]}$. In $\Delta$, the closure of any cell is contractible, thus $\Delta_m$ is contractible. Thus the complex in~(\ref{eq:complex}) is exact.
  
  Finally, since $I_{\sigma}$ is in essence in $\dim(\sigma)$ variables, for $\dim(\sigma)=k$, we have $\pdim S/I_{\sigma}\leq k$ by Hilbert's syzygy theorem and thus $\pdim I_{\sigma}\leq \max(k-1,0)$. Then by taking the minimal free resolutions of $I_{\sigma}$, we get the following double complex of free modules:
  
  \begin{center}
    \begin{tikzcd}[sep=small]
  & & & & & 0 \arrow[d]\\
  & & & & & \displaystyle \bigoplus_{\dim \sigma=n} F_{\sigma,n-1}\arrow[d]\\
  & & & & \iddots & \vdots \arrow[d]\\
  & & & 0 \arrow[r] \arrow[d] & \cdots \arrow[r] & \displaystyle \bigoplus_{\dim \sigma=n} F_{\sigma,2} \arrow[d]  \\
  & 0 \arrow[r] \arrow[d] & 0 \arrow[r] \arrow[d]  & \displaystyle \bigoplus_{\dim \sigma=2} F_{\sigma,1} \arrow[r] \arrow[d] & \cdots \arrow[r] &  \displaystyle \bigoplus_{\dim \sigma=n}F_{\sigma,1} \arrow[d] \\
  0\arrow[r] & \displaystyle \bigoplus_{\dim \sigma=0} F_{\sigma,0}\arrow[r] & \displaystyle \bigoplus_{\dim \sigma=1} F_{\sigma,0} \arrow[r] & \displaystyle \bigoplus_{\dim \sigma=2} F_{\sigma,0} \arrow[r] & \cdots \arrow[r] &  \displaystyle \bigoplus_{\dim \sigma=n}F_{\sigma,0}. 
    \end{tikzcd}
  \end{center}
  The columns of this complex are the direct sums of the free resolutions of the $I_{\sigma}$ terms in (\ref{eq:complex}) and the rows are the maps induced from the complex in (\ref{eq:complex}). Now if we take the total complex of the above complex, we get a free resolution of $I\cap B^{[k]}=\sum_{\sigma\in \Sigma(n)} I_{\sigma}$. This gives $\pdim I\cap B^{[k]}\leq n$ and thus $\pdim S/I\cap B^{[k]} \leq n+1$.
\end{proof}

\begin{example}
  \label{ex:labeling}
  Let $X$ be the toric variety $\bbP^2\times \bbP^1$, and label the vertices like this
  \begin{center}
  \begin{tikzpicture}
    \coordinate (o) at (0,0,0);
    \coordinate (x0) at (0,1,0);
    \coordinate (x1) at (0.7071,0,0.7071);
    \coordinate (x2) at (-1,0,0);
    \coordinate (x3) at (0,0,-1);
    \coordinate (x4) at (0,-1,0);
    
    \node[above] at (x0) {$x_0$};
    \node[right] at (x1) {$x_1$};
    \node[left] at (x2) {$x_2$};
    \node[right] at (x3) {$x_3$};
    \node[below] at (x4) {$x_4$};
    
    \draw[->] (o) -- (x0);
    \draw[->] (o) -- (x1);
    \draw[->] (o) -- (x2);
    \draw[->] (o) -- (x3);
    \draw[->] (o) -- (x4);

    \fill[opacity=0.5,gray] (o) -- (x0) -- (x1);
    \fill[opacity=0.5,gray] (o) -- (x0) -- (x2);
    \fill[opacity=0.5,gray] (o) -- (x0) -- (x3);

    \fill[opacity=0.5,black] (o) -- (x1) -- (x2);
    \fill[opacity=0.5,black] (o) -- (x2) -- (x3);
    \fill[opacity=0.5,black] (o) -- (x3) -- (x1);

    \fill[opacity=0.5,gray] (o) -- (x4) -- (x1);
    \fill[opacity=0.5,gray] (o) -- (x4) -- (x2);
    \fill[opacity=0.5,gray] (o) -- (x4) -- (x3);

    
    
  \end{tikzpicture}.
  \end{center}
  Now take the following monomial ideal
  \[I=\left<{x}_{2} {x}_{3},{x}_{1}^{4} {x}_{2}^{2} {x}_{4},{x}_{0}^{2} {x}_{1}^{4}
       {x}_{4},{x}_{1}^{5} {x}_{2}^{2},{x}_{0}^{2} {x}_{1}^{5},{x}_{1}^{4} {x}_{3}^{3} {x}_{4},{x}_{1}^{5}
       {x}_{3}^{3}\right>.\]
  Then
  \begin{equation*}
    \begin{split}
    I\cap B^{[6]}&=\\
    &x_3^6x_4^6\cdot \left<x_2,x_1^4\right> +\\
    &x_1^6x_4^6\cdot \left<x_2x_3,x_2^2,x_0^2,x_3^3\right> +\\
    &x_2^6x_4^6\cdot \left<x_3,x_1^4\right> +\\
    &x_0^6x_3^6\cdot \left<x_2,x_1^4x_4,x_1^5\right> +\\
    &x_0^6x_1^6\cdot \left<1\right> +\\
    &x_0^6x_2^6\cdot \left<x_3,x_1^4x_4,x_1^5\right>.
    \end{split}
  \end{equation*}

  This gives rise to the following labeled cell complex
  \begin{center}
  \begin{tikzpicture}
    \coordinate (o) at (0,0,0);
    \coordinate (x0) at (0,1,0);
    \coordinate (x1) at (0.7071,0,0.7071);
    \coordinate (x2) at (-1,0,0);
    \coordinate (x3) at (0,0,-1);
    \coordinate (x4) at (0,-1,0);

    \coordinate (v1) at (-1,1); 
    \coordinate (v2) at (1,1); 
    \coordinate (v3) at (0,1.5); 
    \coordinate (v4) at (-1,-1); 
    \coordinate (v5) at (1,-1); 
    \coordinate (v6) at (0,-1.5); 

    \node[left] at (v1) {$x_3^6x_4^6\cdot \left<x_2,x_1^4\right>$};
    \node[above] at (v3) {$x_1^6x_4^6\cdot \left<x_2x_3,x_2^2,x_0^2,x_3^3\right>$};
    \node[right] at (v2) {$x_2^6x_4^6\cdot \left<x_3,x_1^4\right>$};
    \node[left] at (v4) {$x_0^6x_3^6\cdot \left<x_2,x_1^4x_4,x_1^5\right>$};
    \node[below] at (v6) {$x_0^6x_1^6\cdot \left<1\right>$};
    \node[right] at (v5) {$x_0^6x_2^6\cdot \left<x_3,x_1^4x_4,x_1^5\right>$};
    
    \draw (v1)--(v2)--(v3)--cycle;
    \draw (v4)--(v5)--(v6)--cycle;

    \draw (v1)--(v4);
    \draw (v2)--(v5);
    \draw[dashed] (v3)--(v6);
  \end{tikzpicture}.
  \end{center}
\end{example}

\begin{prop}
  For $k\gg 0$, the total betti numbers $\beta_i$ of $I\cap B^{[k]}$ are independent of $k$.
\end{prop}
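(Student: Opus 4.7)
The plan is to reduce the proposition to the theorem of Gasharov--Peeva--Welker, which asserts that the total Betti numbers of a monomial ideal depend only on the isomorphism type of its LCM lattice. It therefore suffices to show that, for $k$ sufficiently large, the LCM lattice of $I\cap B^{[k]}$ is isomorphic as a poset to a single combinatorial object independent of $k$.

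First I would fix $M$ to be the maximum exponent appearing in the minimal generators of $I$ and take $k>M$. As already used in the proof of Lemma~\ref{lem:short}, every generator of $I\cap B^{[k]}$ then has the form $x_{\widehat{\sigma}}^{k}\cdot g|_{\sigma}$ for some $\sigma\in\Sigma$ and some generator $g$ of $I$, where $g|_{\sigma}$ denotes the monomial obtained from $g$ by setting $x_{\tau}\mapsto 1$ for $\tau\notin\sigma$. This list of pairs $(\sigma,g)$, together with the subset of pairs producing \emph{minimal} generators, is intrinsic to $I$ and $\Sigma$ and does not depend on $k$.

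Next I would compute LCMs variable by variable. For any subset $T$ of these generators and any variable $x_{\rho}$, one of two things happens. If $\rho\in\tau:=\bigcap_{j\in T}\sigma_{j}$, every $x_{\widehat{\sigma_{j}}}^{k}$ misses $x_{\rho}$ and only the $g_{j}|_{\sigma_{j}}$'s contribute, with exponents $\leq M$; if $\rho\notin\tau$, some $x_{\widehat{\sigma_{j}}}^{k}$ contributes exponent $k$, which dominates everything else since $k>M$. A short check gives
\[
  \lcm_{j\in T}\bigl(x_{\widehat{\sigma_{j}}}^{k}\,g_{j}|_{\sigma_{j}}\bigr)\;=\;x_{\widehat{\tau}}^{k}\cdot m_{T},
\]
where $m_{T}=\prod_{\rho\in\tau}x_{\rho}^{\max_{j}\deg_{\rho}(g_{j})}$ involves only variables of $\tau$ and is manifestly $k$-independent. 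The same case analysis shows that $x_{\widehat{\tau}}^{k}m_{T}$ divides $x_{\widehat{\tau'}}^{k}m_{T'}$ if and only if $\tau'\subseteq\tau$ and $m_{T}|_{\tau'}$ divides $m_{T'}|_{\tau'}$, which is again $k$-independent. Hence the LCM lattice of $I\cap B^{[k]}$ is isomorphic to a fixed poset for every $k>M$, and the proposition follows from Gasharov--Peeva--Welker.

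The main obstacle I expect is purely bookkeeping: one must keep the supports of the $x_{\widehat{\sigma}}^{k}$-parts and of the $g|_{\sigma}$-parts straight while verifying the LCM formula and the divisibility criterion, and one must check that the passage from the full generating set to the minimal one is itself $k$-independent for $k>M$. Once $k$ exceeds $M$ so that every comparison between a $k$-exponent and a bounded exponent resolves the same way, all the combinatorics collapses onto the fixed poset above.
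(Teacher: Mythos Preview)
Your approach is correct and takes a genuinely different route from the paper's. The paper argues by polarization: for $k$ exceeding the maximal generator degree of $I$, the polarization $J_{k+1}$ of $I\cap B^{[k+1]}$ specializes along a regular sequence $y_{k+1,1}-1,\ldots,y_{k+1,n}-1$ to the polarization $J_k$ of $I\cap B^{[k]}$, giving $\beta_i(I\cap B^{[k]})\le\beta_i(I\cap B^{[k+1]})$; these non-decreasing numbers are then bounded above by the total complex built in Lemma~\ref{lem:short}, whose pieces $I_\sigma=x_{\widehat\sigma}^k\cdot(I:x_{\widehat\sigma}^k)$ have $k$-independent Betti numbers, forcing stabilization. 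You instead compute the lcm lattice directly and invoke Gasharov--Peeva--Welker, bypassing both polarization and the cellular bound. Your argument is cleaner and yields the explicit threshold $k>M$ immediately; the paper's argument is more self-contained (no external black box) and produces as a byproduct the monotonicity $\beta_i(I\cap B^{[k]})\le\beta_i(I\cap B^{[k+1]})$, which has some independent interest. The bookkeeping you flag is harmless: your divisibility criterion, read both ways, shows that $x_{\widehat{\tau}}^{k}m_T=x_{\widehat{\tau'}}^{k}m_{T'}$ forces $\tau=\tau'$ and $m_T=m_{T'}$, so $x_{\widehat{\tau}}^{k}m_T\mapsto(\tau,m_T)$ is a well-defined poset isomorphism onto a $k$-free object, and the same criterion applied to singletons shows the minimal generating set is $k$-independent.
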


This result is similar in flavor to results of Mayes-Tang~\cite{mayes-tang-stability} and Whieldon~\cite{whieldon-stability}, which describe the stabilization of the shapes and decompositions of the betti table of the usual power of an ideal.

\begin{proof}
  Start with $I\cap B^{[k]}$ with $k$ larger than the largest degree of a generator in $I$. Then there is some polarization $J_k$ of $I\cap B^{[k]}$. Let $y_{i,j}$ denote the $i$-th variable added by polarization corresponding to the variable $x_{j}$. Now relate the polarization $J_{k}$ with the polarization $J_{k+1}$. Since $k$ is larger than the degree of any generator of $I$, every generator in $I\cap B^{[k]}$ is of the form $mx_{\widehat{\sigma}}^k$ for some cone $\sigma\in\Sigma$ and monomial $m$ with variables in $\sigma$. Thus after polarization, we get $\widetilde{m}\cdot x_{\widehat{\sigma}}\prod_{i=2}^{k}y_{i,\widehat{\sigma}}$, where $\widetilde{m}$ is the polarization of $m$.
  
  Define rings
  \begin{align*}
    I&\subset R,\\
    J_{k}&\subset S := R[\mathbf{y}_2,\ldots,\mathbf{y}_k],\\
    J_{k+1}&\subset S' := S[\mathbf{y}_{k+1}],
  \end{align*}
  where $\mathbf{y}_i$ represents the variables $y_{i,1},\ldots,y_{i,n}$.

  Since the generators of $J_{k}$ and $J_{k+1}$ differ simply by multiplication by the variables $y_{k+1,i}$, \[J_{k}=J_{k+1}/\left<y_{k+1,1}-1,\ldots,y_{k+1,n}-1\right>\subset S\cong S'/\left<y_{k+1,1}-1,\ldots,y_{k+1,n}-1\right>.\] Since $y_{k+1,1}-1,\ldots,y_{k+1,n}-1$ form a regular sequence, a free resolution of $J_{k+1}$ gives a (not necessarily minimal) free resolution of $J_{k}$.

  Then since the betti numbers of a monomial ideal are equal to those of its polarization, the total betti numbers of $I\cap B^{[k]}$ are non-decreasing with increasing $k$. But since $I_{\sigma} = I \cap \left<x_{\widehat{\sigma}}^k\right> = x_{\widehat{\sigma}}^k\cdot (I:x_{\widehat{\sigma}}^k)$, the total betti numbers of $I_{\sigma}$ are independent of $k$ and so the total betti numbers $I\cap B^{[k]}$ are bounded by the free resolution given by Lemma~\ref{lem:short}. As such, the total betti numbers must in fact stabilize.
\end{proof}

\section{A Shorter Resolution}

While the resolution constructed in Section~\ref{sec:bracket} is shorter than the minimal free resolution in general, it is not as short as the minimal resolutions of ideals on projective space or the short virtual resolutions on products of $\bbP^n$ given by \cite{virtual-res}. We can however make the following observation.

\begin{remark}
  Under the hypotheses of Lemma~\ref{lem:short}, the $(n+1)$-th total betti number of $S/I\cap B^{[k]}$, $\beta_{n+1}$, is at most $1$. In particular, the free resolution given by the proof of Lemma~\ref{lem:short} will have rank $1$ in homological degree $n+1$.
\end{remark}

This $(n+1)$-th betti number corresponds to the single top dimensional cell of the cell complex $\Delta$. So to reduce the length of the resolution, we will try to find a cellular resolution given by a cell complex of one dimension lower. As part of this, we will need to combine the labels on the previous cell complexes to give labels on the new cell complex.

For this let us first define the new cell complex on which we will provide labels.

\begin{definition}
  Given $\Delta$ as in the previous section, fix any ray $\tau$, and define $\widetilde{\Delta}$ to be the subcomplex of $\Delta$ where $[\sigma]\in\widetilde{\Delta}$ if $\sigma\cup\left\{\tau\right\}$ does not form a cone.
\end{definition}

The notation above does not mention the ray $\tau$, because while the choice of the ray $\tau$ affects the ideal and thus virtual resolution that this section constructs, the choice does not affect the conclusions of this paper.

Now for each $[\sigma]\in \widetilde{\Delta}$ we will define a label. To do this, for each $[\sigma]\in \widetilde{\Delta}$ we will define a subset of the set of cones $S(\sigma)$ as the set of cones who's defining rays are a subset of $\sigma\cup\left\{\tau\right\}$.
\[S(\sigma)=\left\{\gamma\in \Sigma\, |\, \gamma\subset \sigma\cup \left\{\tau\right\}\right\}.\]

Now we can define the label on $[\sigma]$ in $\widetilde{\Delta}$ as the following ideal:
\[J_{\sigma} = x_{\widehat{\sigma\cup\left\{\tau\right\}}}^k\cdot \widetilde{J}_{\sigma}\]
where
\begin{equation}
  \label{eq:j-tilde}
  \widetilde{J}_{\sigma} =\bigcap_{\gamma\in S(\sigma)} (I_{\gamma}:x_{\widehat{\gamma}}^\infty).
\end{equation}

\begin{remark}
  Since $I_{\gamma} = \left<x_{\widehat{\gamma}}^k\right>\cap I$, $I_{\gamma}:x_{\widehat{\gamma}}^\infty=I:x_{\widehat{\gamma}}^\infty$. Then since all generators of $I$ have degree in each variable of at most $k$, $I_{\gamma}:x_{\widehat{\gamma}}^k=I:x_{\widehat{\gamma}}^{\infty}$. Thus equation~\ref{eq:j-tilde} is equivalent to \[\widetilde{J}_{\sigma} =\bigcap_{\gamma\in S(\sigma)} (I:x_{\widehat{\gamma}}^k).\]

  Also if $\gamma'\subset \gamma$, then $I:x_{\widehat{\gamma}}^\infty\subset I:x_{\widehat{\gamma'}}^\infty$. So it in fact suffices to take the intersection over cones that are maximal in $S(\sigma)$.

\end{remark}

Now as before, we must show that these labels satisfy the appropriate conditions to give us a resolution of the correct length. We start by showing that the labels respect intersection of cones. An immediate consequence of this will be that for cells $[\sigma]\faceof [\sigma']$ we have $J_{\sigma'}\subset J_{\sigma}$ 

\begin{lemma}
  \label{lem:j-intersect}
  For cells $[\sigma],[\sigma']\in \widetilde{\Delta}$ the labels satisfy the following property: \[J_{\sigma\cap\sigma'}=J_{\sigma}\cap J_{\sigma'}.\]
\end{lemma}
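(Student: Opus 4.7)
My plan is to reduce the equality of monomial ideals to a comparison of the monomials they contain. Unwinding the definition $J_\sigma = x_{\widehat{\sigma \cup \{\tau\}}}^k \cdot \bigcap_{\gamma \in S(\sigma)} (I : x_{\widehat{\gamma}}^k)$ and using that $\widehat{\sigma \cup \{\tau\}} \subset \widehat{\gamma}$ for every $\gamma \in S(\sigma)$, membership $m \in J_\sigma$ is equivalent to the divisibility $x_{\widehat{\sigma \cup \{\tau\}}}^k \mid m$ together with the conditions $m \cdot x_{(\sigma \cup \{\tau\}) \setminus \gamma}^k \in I$ for every $\gamma \in S(\sigma)$. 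Two compatibility facts, both coming from the set identity $(\sigma \cap \sigma') \cup \{\tau\} = (\sigma \cup \{\tau\}) \cap (\sigma' \cup \{\tau\})$, keep the bookkeeping on both sides aligned: the bracket-power prefactors have least common multiple $x_{\widehat{(\sigma \cap \sigma') \cup \{\tau\}}}^k$, and $S(\sigma \cap \sigma') = S(\sigma) \cap S(\sigma')$.

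For the inclusion $J_{\sigma \cap \sigma'} \subseteq J_\sigma \cap J_{\sigma'}$, I would, given $\gamma \in S(\sigma)$, replace $\gamma$ by $\gamma^{*} = \gamma \cap ((\sigma \cap \sigma') \cup \{\tau\})$. Since $\Sigma$ is simplicial, every subset of the rays of $\gamma$ is again a face of $\gamma$ and hence a cone, so $\gamma^{*} \in S(\sigma \cap \sigma')$. Applying the hypothesis at $\gamma^{*}$ produces $m \cdot x_{((\sigma \cap \sigma') \cup \{\tau\}) \setminus \gamma}^k \in I$, and multiplying inside $I$ by the ``missing'' factor $x_{(\sigma \setminus \sigma') \setminus \gamma}^k$ upgrades this to the condition required for $J_\sigma$; the argument is symmetric for $J_{\sigma'}$.

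The main obstacle is the reverse inclusion $J_\sigma \cap J_{\sigma'} \subseteq J_{\sigma \cap \sigma'}$: for $\gamma \in S(\sigma \cap \sigma') \subset S(\sigma)$, membership in $J_\sigma$ only supplies the weaker condition $m \cdot x_{(\sigma \cup \{\tau\}) \setminus \gamma}^k \in I$. I would resolve this by combining two observations. First, the two prefactors $x_{\widehat{\sigma \cup \{\tau\}}}^k$ and $x_{\widehat{\sigma' \cup \{\tau\}}}^k$ together force $m_\eta \geq k$ for every ray $\eta \in \sigma \triangle \sigma'$. Second, the standing choice of $k$ (carried over from Section~\ref{sec:bracket}) exceeds the maximum variable-degree of every generator of $I$. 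Consequently, any generator $g$ of $I$ that witnesses $m \cdot x_{(\sigma \cup \{\tau\}) \setminus \gamma}^k \in I$ satisfies $g_\eta \leq k \leq m_\eta$ on the ``excess'' rays $(\sigma \setminus \sigma') \setminus \gamma$, so $g$ already divides the smaller monomial $m \cdot x_{((\sigma \cap \sigma') \cup \{\tau\}) \setminus \gamma}^k$. Combined with the matching divisibility conditions and the identity $S(\sigma \cap \sigma') = S(\sigma) \cap S(\sigma')$, this delivers the defining conditions for $m \in J_{\sigma \cap \sigma'}$.
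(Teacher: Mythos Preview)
Your proof is correct and relies on the same two ingredients as the paper's: the correspondence $\gamma\mapsto\gamma\cap\bigl((\sigma\cap\sigma')\cup\{\tau\}\bigr)$ between $S(\sigma)$ and $S(\sigma\cap\sigma')$ (using simpliciality), and the standing bound that every generator of $I$ has variable-degree at most $k$. The only difference is packaging: the paper proves the single identity $(J_\sigma\cap J_{\sigma'}):x_{\widehat{(\sigma\cap\sigma')\cup\{\tau\}}}^{\,k}=\widetilde{J}_{\sigma\cap\sigma'}$ by manipulating colon ideals and saturations, whereas you argue both inclusions separately at the level of individual monomials and divisibility; your use of the prefactor $x_{\widehat{\sigma'\cup\{\tau\}}}^{\,k}$ to force $m_\eta\ge k$ on $\sigma\setminus\sigma'$ is exactly the elementwise translation of the paper's passage from $(-):x^k$ to $(-):x^\infty$.
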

\begin{proof}
  For convenience, we define the collection of rays $\omega= (\sigma\cap\sigma')\cup \left\{\tau\right\}$. For convenience, let $m=x_{\widehat{\omega}}$ which gives $J_{\sigma}\cap J_{\sigma'}\subset \left<m^k\right>$. Then it suffices to show \[(J_{\sigma}\cap J_{\sigma'}):m^k = \widetilde{J}_{\sigma\cap\sigma'}.\]

  Now expanding the left hand side, we get the following:
  \begin{align*}
    (J_{\sigma}\cap J_{\sigma'}):m^k &= (J_{\sigma}:m^k\cap J_{\sigma'}:m^k)\\
    &= (x_{\widehat{\sigma\cup\left\{\tau\right\}}}^k\widetilde{J}_{\sigma}):m^k\cap (x_{\widehat{\sigma'\cup\left\{\tau\right\}}}^k\widetilde{J}_{\sigma'}):m^k\\
    &=\widetilde{J}_{\sigma}: (m/x_{\widehat{\sigma\cup\left\{\tau\right\}}})^k\cap \widetilde{J}_{\sigma'}:(m/x_{\widehat{\sigma'\cup\left\{\tau\right\}}})^k\\
    &=\left(\bigcap_{\gamma\in S(\sigma)} (I:x_{\widehat{\gamma}}^\infty)\right): (m/x_{\sigma\cup\left\{\tau\right\}})^k\cap\left(\bigcap_{\gamma\in S(\sigma')} (I:x_{\widehat{\gamma}}^\infty)\right): (m/x_{\sigma'\cup\left\{\tau\right\}})^k\\
    &=\left(\bigcap_{\gamma\in S(\sigma)} \left(I:x_{\widehat{\gamma}}^\infty: (m/x_{\sigma\cup\left\{\tau\right\}})^k\right)\right)\cap \left(\bigcap_{\gamma\in S(\sigma')} \left(I:x_{\widehat{\gamma}}^\infty: (m/x_{\sigma'\cup\left\{\tau\right\}})^k\right)\right) 
  \end{align*}

  Again, $I$ is monomial and has only generators with degrees in each variable at most $k$, so we can simplify.

  \begin{align*}
    I:x_{\widehat{\gamma}}^{\infty}: (m/x_{\widehat{\sigma\cup\left\{\tau\right\}}})^k &= I:x_{\widehat{\gamma}}^\infty: (m/x_{\widehat{\sigma\cup\left\{\tau\right\}}})^\infty\\
    &= I:\lcm(x_{\widehat{\gamma}},m/x_{\widehat{\sigma\cup\left\{\tau\right\}}})^\infty\\
    &= I:x_{\widehat{\gamma\cap \omega}}^\infty
  \end{align*}

  The cone $\gamma\cap \omega$ is a cone in $S(\sigma\cap\sigma')$ and moreover, every cone in $S(\sigma\cap\sigma')$ can be written in this form, thus by combining with the previous work, we find that \[(J_{\sigma}\cap J_{\sigma'}):m^k = \bigcap_{\sigma\in S(\sigma\cap\sigma')}  I:x_{\widehat{\gamma}}^\infty = \widetilde{J}_{\sigma\cap\sigma'}.\qedhere\]
  \end{proof}

\begin{lemma}
  \label{lem:j-pdim}
  For $[\sigma]\in \widetilde{\Delta}$ we have $\pdim J_{\sigma}\leq \dim \sigma-1.$
\end{lemma}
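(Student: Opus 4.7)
The plan is to strip off the common monomial factor, pass to the polynomial subring in the $\dim\sigma+1$ variables indexed by $\sigma\cup\{\tau\}$, and then deduce the projective dimension bound from Auslander--Buchsbaum after ruling out the maximal ideal of that subring as an associated prime.

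First, since multiplication by $x_{\widehat{\sigma\cup\{\tau\}}}^k$ is injective, $J_\sigma$ and $\widetilde J_\sigma$ are isomorphic as $S$-modules up to a multidegree shift, so it suffices to show $\pdim\widetilde J_\sigma\leq \dim\sigma-1$. Next, I would fix a monomial primary decomposition $I = \bigcap_T Q_T$ with each $Q_T$ being $P_T$-primary for $P_T=\langle x_\rho:\rho\in T\rangle$. A short calculation gives $Q_T:x_{\widehat\gamma}^\infty = Q_T$ if $T\subset\gamma$ and $=S$ otherwise, so
\[\widetilde J_\sigma = \bigcap_{T\in\mathcal T(\sigma)} Q_T,\qquad \mathcal T(\sigma):=\{T:T\subset\gamma\text{ for some }\gamma\in S(\sigma)\}.\]
The crucial combinatorial input is the defining property of $\widetilde\Delta$: since $\sigma\cup\{\tau\}$ is not a cone of $\Sigma$, every $T\in\mathcal T(\sigma)$ is a \emph{proper} subset of $\sigma\cup\{\tau\}$.

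Let $d=\dim\sigma$ and $R=\field[x_\rho:\rho\in\sigma\cup\{\tau\}]$, a polynomial ring of Krull dimension $d+1$. Because each $Q_T$ has minimal generators only in the variables indexed by $T\subset\sigma\cup\{\tau\}$, the ideal $\widetilde J_\sigma$ is extended from a monomial ideal $\widetilde J_\sigma^R\subset R$, and flatness of $S$ over $R$ gives $\pdim_S\widetilde J_\sigma = \pdim_R\widetilde J_\sigma^R$. I would then show $\depth_R(R/\widetilde J_\sigma^R)\geq 1$, feed this into the depth lemma applied to $0\to\widetilde J_\sigma^R\to R\to R/\widetilde J_\sigma^R\to 0$ to obtain $\depth_R\widetilde J_\sigma^R\geq 2$, and conclude $\pdim_R\widetilde J_\sigma^R \leq (d+1)-2 = \dim\sigma-1$ by Auslander--Buchsbaum.

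The heart of the argument---and the only place where the hypothesis on $\sigma\cup\{\tau\}$ truly enters---is showing that the maximal ideal $\langle x_\rho:\rho\in\sigma\cup\{\tau\}\rangle$ of $R$ is not an associated prime of $R/\widetilde J_\sigma^R$. If some monomial $u\notin \widetilde J_\sigma^R$ witnessed the maximal ideal being associated, I would choose $T^*\in\mathcal T(\sigma)$ with $u\notin Q_{T^*}^R$; the proper containment $T^*\subsetneq\sigma\cup\{\tau\}$ produces some $\rho_0\in(\sigma\cup\{\tau\})\setminus T^*$, and then $x_{\rho_0}u\in Q_{T^*}^R$ together with $x_{\rho_0}\notin P_{T^*}^R$ contradicts the $P_{T^*}^R$-primariness of $Q_{T^*}^R$. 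The main obstacle to watch for is purely combinatorial bookkeeping: verifying that the indexing set $\mathcal T(\sigma)$ produced by saturating-then-intersecting really does exclude $T=\sigma\cup\{\tau\}$, which is exactly what the not-a-cone hypothesis guarantees.
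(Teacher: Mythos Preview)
Your argument is correct and follows the same overall architecture as the paper: strip the monomial factor, pass to the subring $R=\field[x_\rho:\rho\in\sigma\cup\{\tau\}]$, and apply Auslander--Buchsbaum after establishing $\depth_R(R/\widetilde J_\sigma^R)\geq 1$. The only real difference is in how that depth bound is obtained. The paper never invokes a primary decomposition of $I$; instead it exhibits the explicit nonzerodivisor $m=\sum_{\rho\in\sigma\cup\{\tau\}}x_\rho$ on $R/\widetilde J_\sigma$ directly, by a leading-term argument applied to each intersectand $I:x_{\widehat\gamma}^\infty$. Your route---identifying $\widetilde J_\sigma$ as $\bigcap_{T\in\mathcal T(\sigma)}Q_T$ and then ruling out the maximal ideal as an associated prime---is a cleaner and more conceptual way to reach the same conclusion, and it makes the role of the hypothesis ``$\sigma\cup\{\tau\}$ is not a cone'' completely transparent (it is exactly what forces every $T\in\mathcal T(\sigma)$ to be proper). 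Both arguments rest on the identical combinatorial fact that every $\gamma\in S(\sigma)$ omits at least one ray of $\sigma\cup\{\tau\}$; your version packages it via $\operatorname{Ass}$, the paper's via an explicit regular element.
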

\begin{proof}
  This reduces to showing that $\pdim \widetilde{J}_{\sigma}\leq \dim \sigma-1$. Furthermore, since the generators of $\widetilde{J}_{\sigma}$ can be expressed in terms of only those variables corresponding to rays in $\sigma\cup \left\{\tau\right\}$, we can view it as an ideal of $R=\field[\sigma\cup\left\{\tau\right\}]$ rather than $S=\field[\Sigma]$.
  
  If $\widetilde{J}_{\sigma}=R$, then the statement is trivial, otherwise, we can use the Auslander-Buchsbaum formula to bound the projective dimension.
  \begin{align*}
    \pdim \widetilde{J}_{\sigma} &= \pdim R/\widetilde{J}_{\sigma} - 1\\
    &\leq \depth R-\depth R/\widetilde{J}_{\sigma}-1\\
    &= \dim\sigma-\depth R/\widetilde{J}_{\sigma}.
  \end{align*}
  Thus it suffices to show that $R/\widetilde{J}_{\sigma}$ is depth at least $1$. This we can do this by exhibiting an explicit nonzerodivisor, namely\[m = \sum_{\rho\in S(\sigma)} x_{\rho}.\]

    Thus we wish to show that if $mf\in \widetilde{J}_{\sigma}$ then $f\in \widetilde{J}_{\sigma}$ for $x\in R$. Since $\widetilde{J}_{\sigma}=\bigcap_{\gamma\in S(\sigma)} I:x_{\widehat{\gamma}}^\infty$, it suffices to show for all $\gamma\in S(\sigma)$ that if $mf \in I:x_{\widehat{\gamma}}^\infty$ then $f\in I:x_{\widehat{\gamma}}^\infty$
    
    For this statement, let $\rho\in \sigma\cup\left\{\tau\right\}$ and $\rho\notin \gamma$, and fix a lex monomial order with $x_{\rho}$ the maximal variable. Then consider an element $f\notin I:x_{\widehat{\gamma}}^\infty$. Since our ideal is monomial we may assume that $f$ contains no terms in $I:x_{\widehat{\gamma}}^\infty$.

    Then by computing leading terms we find $LT(mf) = x_{\rho}LT(f)$. Since $x_{\rho}|x_{\widehat{\gamma}}$, it follows that $I:x_{\widehat{\gamma}}^\infty$ has no generators divisible by $x_{\rho}$. Furthermore, by assumption $LT(f)\notin I:x_{\widehat{\gamma}}^\infty$ so this implies $x_{\rho}LT(f)\notin I:x_{\widehat{\gamma}}^\infty$ and thus $mf\notin I:x_{\widehat{\gamma}}^\infty$. Thus $m$ is not a zero-divisor and since $m$ is non-zero, it is a nonzerodivisor.
\end{proof}

\begin{lemma}
  \label{lem:j-saturation}

  \[\left(\sum_{[\sigma]\in \widetilde{\Delta}} J_{\sigma}\right):B^{\infty}=I\]
  
\end{lemma}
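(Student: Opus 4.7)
The plan is to establish both containments of the stated equality, using that $I$ is $B$-saturated (the standing hypothesis from Theorem~\ref{thm:main-theorem}) so that $I = I:B^\infty = \bigcap_{\sigma_0 \in \Sigma(n)} I:x_{\widehat{\sigma_0}}^\infty$, together with the fact that $I \subseteq \widetilde{J}_\sigma$ for every $[\sigma] \in \widetilde{\Delta}$ (immediate from the intersection definition of $\widetilde{J}_\sigma$ and $I \subseteq I:x_{\widehat{\gamma}}^\infty$).

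For the containment $\left(\sum_\sigma J_\sigma\right):B^\infty \subseteq I$, it suffices to show $J_\sigma \subseteq I$ for each $[\sigma] \in \widetilde{\Delta}$, and this in turn reduces to proving $J_\sigma \cdot x_{\widehat{\sigma_0}}^k \subseteq I$ for every maximal cone $\sigma_0$. The key step is to introduce the cone $\gamma_{\sigma,\sigma_0} := (\sigma \cup \{\tau\}) \cap \sigma_0$: this is a subset of the rays of the simplicial cone $\sigma_0$, hence a face of $\sigma_0$ and thus an element of $\Sigma$, and by construction $\gamma_{\sigma,\sigma_0} \subseteq \sigma \cup \{\tau\}$ places it in $S(\sigma)$. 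Comparing exponents one ray at a time, for any ray $\rho$ we have $\rho \notin \gamma_{\sigma,\sigma_0}$ precisely when $\rho \notin \sigma \cup \{\tau\}$ or $\rho \notin \sigma_0$, so $x_{\widehat{\gamma_{\sigma,\sigma_0}}}^k$ divides $x_{\widehat{\sigma \cup \{\tau\}}}^k x_{\widehat{\sigma_0}}^k$. Combining this divisibility with $\widetilde{J}_\sigma \subseteq I:x_{\widehat{\gamma_{\sigma,\sigma_0}}}^\infty$ yields $J_\sigma \cdot x_{\widehat{\sigma_0}}^k = x_{\widehat{\sigma \cup \{\tau\}}}^k \widetilde{J}_\sigma \cdot x_{\widehat{\sigma_0}}^k \subseteq I$, as required.

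For the reverse containment $I \subseteq \left(\sum_\sigma J_\sigma\right):B^\infty$, given $f \in I$ and a maximal cone $\sigma_0$, I choose $\sigma' \in \widetilde{\Delta}$ and show $f \cdot x_{\widehat{\sigma_0}}^k \in J_{\sigma'}$. If $\tau \notin \sigma_0$, then $\sigma_0 \cup \{\tau\}$ fails to be a cone by maximality of $\sigma_0$, so $[\sigma_0] \in \widetilde{\Delta}$; I take $\sigma' = \sigma_0$, and the factorization $x_{\widehat{\sigma_0}}^k = x_\tau^k \cdot x_{\widehat{\sigma_0 \cup \{\tau\}}}^k$ gives $f \cdot x_{\widehat{\sigma_0}}^k = x_{\widehat{\sigma_0 \cup \{\tau\}}}^k \cdot (x_\tau^k f) \in x_{\widehat{\sigma_0 \cup \{\tau\}}}^k \cdot I \subseteq x_{\widehat{\sigma_0 \cup \{\tau\}}}^k \widetilde{J}_{\sigma_0} = J_{\sigma_0}$. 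If $\tau \in \sigma_0$, completeness of $\Sigma$ guarantees that the facet $\sigma_0 \setminus \{\tau\}$ is shared with a unique second maximal cone $\sigma_0' = (\sigma_0 \setminus \{\tau\}) \cup \{\rho\}$; here $\tau \notin \sigma_0'$, so $[\sigma_0'] \in \widetilde{\Delta}$, and the same factorization argument with $\rho$ in place of $\tau$ gives $f \cdot x_{\widehat{\sigma_0}}^k \in J_{\sigma_0'}$.

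The main obstacle is the first containment: verifying the uniform divisibility $x_{\widehat{\gamma_{\sigma,\sigma_0}}}^k \mid x_{\widehat{\sigma \cup \{\tau\}}}^k x_{\widehat{\sigma_0}}^k$ is elementary but requires careful case analysis of where each ray $\rho$ lands among $\sigma$, $\sigma_0$, and $\{\tau\}$, and the simpliciality hypothesis enters essentially by guaranteeing that $\gamma_{\sigma,\sigma_0}$, as an arbitrary subset of the rays of $\sigma_0$, is actually a cone of $\Sigma$. The reverse direction is more transparent, with completeness of $\Sigma$ providing the crucial geometric input for the case $\tau \in \sigma_0$.
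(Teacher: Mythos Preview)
Your proposal is correct and follows essentially the same strategy as the paper: sandwich $J := \sum_{[\sigma]\in\widetilde{\Delta}} J_\sigma$ between a $B^{[k]}$-multiple of $I$ and $I$ itself, then use $B$-saturation of $I$. The argument for $J_\sigma \subset I$ is identical to the paper's, pivoting on the cone $\gamma' = (\sigma \cup \{\tau\}) \cap \sigma_0 \in S(\sigma)$ and the divisibility $x_{\widehat{\gamma'}} \mid x_{\widehat{\sigma\cup\{\tau\}}}\, x_{\widehat{\sigma_0}}$.

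For the reverse containment you take a slightly different route. The paper shows $I\cap B^{[k]}\subset J$ via the decomposition $I\cap B^{[k]}=\sum_\gamma I_\gamma$ and the inclusion $I_\gamma\subset J_\sigma$ for $\gamma\in S(\sigma)$, leaving implicit that every cone $\gamma$ actually lies in some $S(\sigma)$ with $[\sigma]\in\widetilde{\Delta}$. Your argument instead shows $I\cdot x_{\widehat{\sigma_0}}^k\subset J_{\sigma'}$ directly for each maximal $\sigma_0$, with the case split on whether $\tau\in\sigma_0$ and the ``adjacent maximal cone across the facet $\sigma_0\setminus\{\tau\}$'' trick making that implicit step explicit. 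This is a clean variant that highlights exactly where completeness enters; both arguments are equally short.
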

\begin{proof}

  Let us start by setting $J=\sum_{[\sigma]\in \widetilde{\Delta}} J_{\sigma}$. We will proceed by showing that $I\cap B^{[k]}\subset J \subset I$. Since $I$ is $B$-saturated this will imply $J:B^{\infty}=I$.
  
  We start by showing $J_{\sigma}\subset I$. Since $I$ is $B$-saturated, it suffices to show that $B^{[k]}\cdot J_{\sigma}\subset I$. This can further be simplified to showing that for each element $f\in J_{\sigma}$ and each cone $\gamma\in \Sigma$, we have $x_{\widehat{\gamma}}^kf \in I$.

  For $f\in J_{\sigma}$ there exists $f'\in \widetilde{J}_{\sigma}$ such that $f=x_{\widehat{\sigma\cup \left\{\tau\right\}}}^k f'$. For convenience, let $\gamma'=\gamma\cap (\sigma \cup \left\{\tau\right\})$. Then $\gamma'\in S(\sigma)$ and thus $f'\in I:x_{\widehat{\gamma'}}^{k}$ which implies $x_{\widehat{\gamma'}}^kf' \in I$. But we know
  \[x_{\widehat{\gamma'}}=\lcm\left(x_{\widehat{\gamma}},x_{\widehat{\sigma\cup\left\{\tau\right\}}}\right)\, \Big\vert\, x_{\widehat{\gamma}}x_{\widehat{\sigma\cup\left\{\tau\right\}}},\] and thus \[x_{\widehat{\gamma'}}^kf'\, \Big|\, x_{\widehat{\gamma}}^kx_{\widehat{\sigma\cup\left\{\tau\right\}}}^kf'.\]
  
  Finally since $x_{\widehat{\gamma}}^kf=x_{\widehat{\gamma}}^kx_{\widehat{\sigma\cup\left\{\tau\right\}}}^kf'$ we have $x_{\widehat{\gamma}}^kf\in I$. Thus $J_\sigma\subset I$ which implies that $J\subset I$.

  Next we will show that $I\cap B^{[k]}\subset J$. Since $I\cap B^{[k]}=\sum_{\gamma\in \Sigma} I_{\gamma}$, it suffices to show that $I_{\gamma}\subset J_{\sigma}$ for all $[\sigma]\in \widetilde{\Delta}$ and $\gamma\in S(\sigma)$. Now expanding the definition of $J_{\sigma}$, we see that it suffices to show for all $\omega\in S(\sigma)$ that $I_{\gamma}\subset x_{\widehat{\sigma\cup\left\{\tau\right\}}}^k (I:x_{\widehat{\omega}}^\infty)$. Finally, recall that $I_{\gamma}= I\cap \left<x_{\widehat{\gamma}}^k\right>\subset \left<x_{\widehat{\sigma\cap\left\{\tau\right\}}}^{k}\right>$. Thus this is equivalent to the following statement: \[I_{\gamma}:x_{\widehat{\sigma\cap\left\{\tau\right\}}}^{k}\subset I:x_{\widehat{\omega}}^\infty.\]
  
  Now let $f\in I_{\gamma}:x_{\widehat{\sigma\cap\left\{\tau\right\}}}^{k}$ and consider $x_{\widehat{\omega}}^kf$. Since $\omega\in S(\sigma)$, we can expand to get the following: \[x_{\widehat{\omega}}^kf= x_{\sigma\cup\left\{\tau\right\}\setminus \omega}^kx_{\widehat{\sigma\cup \left\{\tau\right\}}}^kf.\]

  Finally, since $x_{\widehat{\sigma\cup \left\{\tau\right\}}}^kf\in I_{\gamma}\subset I$, we get $x_{\widehat{\omega}}^kf\in I$. Thus we have that $I_{\gamma}\subset x_{\widehat{\sigma\cup\left\{\tau\right\}}}^k\cdot (I:x_{\widehat{\omega}}^k)$. As stated above, this implies $I\cap B^{[k]}\subset J$.

  Together with the previous work, this gives the inclusions $I\cap B^{[k]}\subset J \subset I$ and thus $J:B^{\infty} = I$.
\end{proof}

At this point we move back to Theorem~\ref{thm:main-theorem}, and combine the results of the previous lemmas to show that $\widetilde{\Delta}$ gives a virtual resolution of $S/I$.

\begin{proof}[Proof of Theorem~\ref{thm:main-theorem}]

  We will proceed to show that $\pdim S/J\leq n$ for \[J:=\sum_{\substack{[\sigma]\in\widetilde{\Delta} \\ \dim \sigma=n}} J_{\sigma}.\] Assuming the conditions of Corollary~\ref{cor:resolution} and applying it to $\widetilde{\Delta}$, the following complex will be acyclic:
  \begin{equation}\label{eq:j-complex}
    0\rightarrow \bigoplus_{\dim\sigma = 1} J_{\sigma}\rightarrow \cdots \rightarrow \bigoplus_{\dim\sigma = n-1} J_{\sigma}\rightarrow \bigoplus_{\dim\sigma = n} J_{\sigma}\rightarrow J\rightarrow 0.
  \end{equation}

  For Corollary~\ref{cor:resolution}, we need that that for $\sigma\subset \gamma$ we have $J_{\sigma}\subset J_{\gamma}$ and that every subcomplex $\widetilde{\Delta}_m$ induced by a monomial is contractible. The first is a direct consequence of Lemma~\ref{lem:j-intersect}.

  
  To show that $\widetilde{\Delta}_m$ is contractible for all monomials $m$ we will split into two cases, either $\widetilde{\Delta}_m$ contains two cells not contained in any common face, or every pair of cells is contained in a common face.\\

  \noindent \emph{Case 1:} $\widetilde{\Delta}_m$ contains two cells that are not contained in any common face in $\widetilde{\Delta}$.

  Here we will prove that $\widetilde{\Delta}_m=\widetilde{\Delta}$ and thus contractible. So let $[\sigma], [\gamma]\in\widetilde{\Delta}$ be two cells not contained in any common face. This implies that $\sigma\cap\gamma=\emptyset$. Since $J_{\sigma}\subset \left<x_{\widehat{\tau\cup \sigma}}^k\right>$ we get $J_{\sigma}\cap J_{\gamma}\subset \left<x_{\widehat{\tau}}^k\right>$. On the other hand, $\left<x_{\widehat{\tau}}^k\right>\subset J_{\mu}$ for all cells $[\mu]\in\widetilde{\Delta}$ and thus $m\in J_{\mu}$ for all cells $[\mu]$. This gives that $[\mu]\in \widetilde{\Delta}_m$ for every cell in $\widetilde{\Delta}$ and thus $\widetilde{\Delta}_m=\widetilde{\Delta}$ and since $\widetilde{\Delta}$ is contractible $\widetilde{\Delta}_m$ is contractible as well.\\

  \noindent \emph{Case 2:} All pairs of cells in $\widetilde{\Delta}_m$ are contained in a common face of $\widetilde{\Delta}$.

  We will now show that $\widetilde{\Delta}_m$ is the closure of a cell in $\widetilde{\Delta}$. Let $[\sigma],[\gamma]$ be two cells in $\widetilde{\Delta}_m$. By assumption $\sigma\cap\gamma\neq \emptyset$. Since $\Sigma$ is simplicial $\sigma\cap\gamma$ is a cone. Furthermore, $\sigma\cap\tau$ does not contain $\tau$ and thus $[\sigma\cap\gamma]$ is a cell of $\widetilde{\Delta}$. Moreover, $[\sigma]$ and $[\gamma]$ are in the closure of $[\sigma\cap\gamma]$. By assumption $m\in J_{\sigma}$ and $m\in J_{\gamma}$ thus $m\in J_{\sigma\cap\gamma}$. Thus $[\sigma\cap\gamma]$ is a cell of $\widetilde{\Delta}_m$. Then iteratively applying this procedure to the cells in $\widetilde{\Delta}_m$, we find that $\widetilde{\Delta}_m$ must be the closure of a single cell. Then since every cell in $\Delta$ and thus $\widetilde{\Delta}$ has a contractible closure, $\widetilde{\Delta}_m$ is contractible.

  Now that we have shown $\widetilde{\Delta}_m$ is contractible, we need to show that the resulting resolution is sufficiently short. By Lemma~\ref{lem:j-pdim} we have $\pdim J_{\sigma}\leq \dim \sigma - 1$. Then as in the proof of Lemma~\ref{lem:short} we construct the total complex of the double complex given by resolving each of the terms in the complex in Equation~\ref{eq:j-complex} and find that $\pdim J\leq n-1$ and thus $\pdim S/J\leq n$.
\end{proof}

\begin{remark}
  The procedure in the proof depends on the ability to construct a correspondence between the cells of $\widetilde{\Delta}$ and $\Delta$. and as such, we cannot repeat the procedure to give even shorter resolutions.
\end{remark}

\begin{example}
  Continuing from Example~\ref{ex:labeling}, we can view the process in the proof of Theorem~\ref{thm:main-theorem} as ``collapsing'' the labeled cell complex from the example. The information from the cells that are removed from the complex are in some sense spread across the remaining cells. In this example we use the ray corresponding to $x_0$ for $\tau$. Thus the vertices that remain in $\widetilde{\Delta}$ correspond to the cones $\left\{x_1,x_2,x_4\right\}$, $\left\{x_1,x_3,x_4\right\}$, and $\left\{x_2,x_3,x_4\right\}$. This yields the following labeled cell complex:
  \begin{center}
    \begin{tikzpicture}
      \coordinate (w1) at (1,0);
      \coordinate (w2) at (-0.7071,0.7071);
      \coordinate (w3) at (-0.7071,-0.7071);
      
      \node[right] at (w1){$x_2^6\cdot \left<{x}_{3},{x}_{1}^{4} {x}_{4},{x}_{1}^{5}\right>$};
      \node[below] at (w3){$x_3^6\cdot \left<{x}_{2} {x}_{1},{x}_{4}^{2},{x}_{1}^{5}\right>$};
      \node[above] at (w2){$x_1^6\cdot \left<{x}_{2} {x}_{3},{x}_{2}^{2},{x}_{0}^{2},{x}_{3}^{3} \right> $};
      \draw (w1)--(w2)--(w3)--cycle;
    \end{tikzpicture}.
  \end{center}

  Then let
  \begin{align*}
    J_{1}&=x_1^6\cdot \left<{x}_{2} {x}_{3},{x}_{2}^{2},{x}_{0}^{2},{x}_{3}^{3} \right>,\\
    J_{2}&=x_2^6\cdot \left<{x}_{3},{x}_{1}^{4} {x}_{4},{x}_{1}^{5}\right>,\\
    J_{3}&=x_3^6\cdot \left<{x}_{2} {x}_{1},{x}_{4}^{2},{x}_{1}^{5}\right>.
  \end{align*}

  Then for $S=k[\Sigma]$, this gives the following resolutions:
  \[0\rightarrow S^2 \rightarrow S^5 \rightarrow S^4 \rightarrow J_1,\]
  \[0\rightarrow S^1 \rightarrow S^3 \rightarrow S^3 \rightarrow J_2,\]
  \[0\rightarrow S^1 \rightarrow S^3 \rightarrow S^3 \rightarrow J_3.\]

  Furthermore the ideals $J_{1}\cap J_{2},J_{1}\cap J_{2},J_{1}\cap J_{2},J_{1}\cap J_{2}\cap J_{3}$ are all principal and thus free of rank $1$. Then the resulting free resolution from the total complex of the cellular resolution given by the above diagram is

  \[0\rightarrow S^{5}\rightarrow S^{14}\rightarrow S^{10}\rightarrow J.\]

  Thus as desired $\pdim S/J \leq 3$, and so $\vpdim S/J\leq \dim \Sigma$. In this instance the resolution this procedure yields happens to be a minimal free resolution of $J$.
\end{example}

 \begin{bibdiv}
   \begin{biblist}
     \bib{monomial-res}{article}{
        author={Bayer, Dave},
        author={Peeva, Irena},
        author={Sturmfels, Bernd},
        title={Monomial resolutions},
        journal={Math. Res. Lett.},
        volume={5},
        date={1998},
        number={1-2},
        pages={31--46},
        issn={1073-2780},
        doi={10.4310/MRL.1998.v5.n1.a3},
     }
     \bib{cellular-res}{article}{
       author={Bayer, Dave},
       author={Sturmfels, Bernd},
       title={Cellular resolutions of monomial modules},
       journal={J. Reine Angew. Math.},
       volume={502},
       date={1998},
       pages={123--140},
       issn={0075-4102},
       doi={10.1515/crll.1998.083},
     }
     \bib{cb-hypergeometric}{article}{
        author={Berkesch, Christine},
        title={The rank of a hypergeometric system},
        journal={Compos. Math.},
        volume={147},
        date={2011},
        number={1},
        pages={284--318},
        issn={0010-437X},
        review={\MR{2771133}},
        doi={10.1112/S0010437X10004811},
      }
      \bib{virtual-res}{article}{
        author = {Berkesch, Christine},
        author = {Erman, Daniel},
        author = {Smith, Gregory G.},
        title = {Virtual Resolutions for a Product of Projective Spaces},
        journal = {ArXiv},
        eprint = {arXiv:1703.07631},
        date = {2017}
      }
      \bib{bigatti}{article}{
        author={Bigatti, Anna Maria},
        title={Upper bounds for the Betti numbers of a given Hilbert function},
        journal={Comm. Algebra},
        volume={21},
        date={1993},
        number={7},
        pages={2317--2334}
      }
      \bib{hatcher}{book}{
        author={Hatcher, Allen},
        title={Algebraic topology},
        publisher={Cambridge University Press, Cambridge},
        date={2002},
        pages={xii+544},
        isbn={0-521-79160-X},
        isbn={0-521-79540-0},
      }
      \bib{hulett}{book}{
        author={Hulett, Heather Ann},
        title={Maximum Betti numbers for a given Hilbert function},
        note={Thesis (Ph.D.)--University of Illinois at Urbana-Champaign},
        publisher={ProQuest LLC, Ann Arbor, MI},
        date={1993},
        pages={60},
      }
      \bib{mayes-tang-stability}{article}{
        author={Mayes-Tang, Sarah},
        title={Stabilization of Boij-S\"{o}derberg decompositions of ideal powers},
        journal={J. Pure Appl. Algebra},
        volume={223},
        date={2019},
        number={2},
        pages={571--579},
        issn={0022-4049},
        doi={10.1016/j.jpaa.2018.04.007},
      }

      \bib{miller-cm}{article}{
        author={Miller, Ezra},
        title={Topological Cohen-Macaulay criteria for monomial ideals},
        conference={
          title={Combinatorial aspects of commutative algebra},
        },
        book={
          series={Contemp. Math.},
          volume={502},
          publisher={Amer. Math. Soc., Providence, RI},
        },
        date={2009},
        pages={137--155},
        doi={10.1090/conm/502/09861},
      }
      \bib{combinatorial-ca}{book}{
        author={Miller, Ezra},
        author={Sturmfels, Bernd},
        title={Combinatorial commutative algebra},
        series={Graduate Texts in Mathematics},
        volume={227},
        publisher={Springer-Verlag, New York},
        date={2005},
        isbn={0-387-22356-8},
      }
      \bib{pardue}{article}{
        author={Pardue, Keith},
        title={Deformation classes of graded modules and maximal Betti numbers},
        journal={Illinois J. Math.},
        volume={40},
        date={1996},
        number={4},
        pages={564--585},
      }
      \bib{whieldon-stability}{article}{
        author={Whieldon, Gwyneth},
        title={Stabilization of Betti tables},
        journal={J. Commut. Algebra},
        volume={6},
        date={2014},
        number={1},
        pages={113--126},
        issn={1939-0807},
        doi={10.1216/JCA-2014-6-1-113},
      }

   \end{biblist}
 \end{bibdiv}
 
\end{document}